\documentclass[11pt]{article}
\usepackage[utf8]{inputenc}
\def\final{1}
\usepackage{color}
\usepackage{subfigure}
\usepackage{url}
\usepackage{graphicx}
\usepackage{amsmath}
\usepackage{amsthm}
\usepackage{bm}
\usepackage[plainpages=false]{hyperref}

\usepackage{titlesec}
\titleformat{\section}
{\normalfont\large\bfseries\filcenter}{\thesection.}{1 ex}{}
\titleformat{\subsection}[runin]
{\normalfont\normalsize\bfseries\filcenter}{\thesubsection.}{1 ex}{}

\usepackage[charter]{mathdesign}
\usepackage[mathcal]{eucal}

\usepackage[margin=1.2 in]{geometry}

\usepackage[square,numbers,sort&compress]{natbib}

\ifnum\final=0
\newcommand{\mynote}[1]{\marginpar{\tiny\sf #1}}
\else
\newcommand{\mynote}[1]{}
\fi

\usepackage{theomac}
\newtheoremWithMacro{theorem}{Theorem}
\newtheoremWithMacro{lemma}{Lemma}[section]
\newtheorem{question}{Question}

\newtheoremWithMacro{corollary}{Corollary}
\newtheoremWithMacro{prop}[lemma]{Proposition}

\newcommand{\figref}[1]{Figure \ref{fig:#1}}

\newcommand{\lemref}[1]{Lemma \ref{lemma:#1}}

\newcommand{\propref}[1]{Proposition \ref{prop:#1}}
\newcommand{\theoref}[1]{Theorem \ref{theo:#1}}
\newcommand{\secref}[1]{Section \ref{sec:#1}}
\renewcommand{\eqref}[1]{(\ref{eq:#1})}

\newcommand{\lemlab}[1]{\label{lemma:#1}}
\newcommand{\proplab}[1]{\label{prop:#1}}
\newcommand{\theolab}[1]{\label{theo:#1}}
\newcommand{\seclab}[1]{\label{sec:#1}}
\newcommand{\eqlab}[1]{\label{eq:#1}}

\newcommand{\R}{\mathbb{R}}
\newcommand{\Z}{\mathbb{Z}}
\newcommand{\HH}{\mathrm{H}}
\newcommand{\eop}{\hfill$\qed$}
\newcommand{\bgamma}{\bm{\gamma}}
\begin{document}
\title{Henneberg constructions and covers of cone-Laman graphs}
\author{Louis Theran\thanks{Institut für Mathematik,
Diskrete Geometrie, Freie Universität Berlin, \url{theran@math.fu-berlin.de}}}
\date{}
\maketitle
\begin{abstract}
\begin{normalsize}
We give Henneberg-type constructions for three families of sparse colored graphs
arising in the rigidity theory of periodic and forced symmetric frameworks.  The proof
method, which works with Laman-sparse finite covers of colored graphs highlights
the connection between these sparse colored families and the well-studied matroidal
$(k,\ell)$-sparse families.
\end{normalsize}
\end{abstract}

\section{Introduction} \seclab{intro}
Let $G=(V,E)$ be a finite directed graph, let $\Gamma$ be a group, and let $\bgamma=(\gamma_{ij})$ be
an assignment of a ``color'' $\gamma_{ij}\in \Gamma$.  The tuple $(G,\bgamma)$ is called a
\emph{colored graph}%
\footnote{Colored graphs are also known as ``gain graphs'' \cite{Z98}.}
.  In this paper, $\Gamma$ will always be one of the abelian groups: $\Z/p\Z$, $\Z$,
$\Z/p\Z\times \Z/q\Z$, or $\Z^2$.  For these $\Gamma$, there is a well-defined homomorphism $\rho$
from the cycle space $\HH_1(G,\Z)$ to $\Gamma$, which we describe in \secref{symmetric}.  The
number of linearly independent elements in the the image of $\rho$ on a subgraph $G'$ of $G$ is an invariant of
$G'$ which we define to to be the \emph{$\rho$-rank of a subgraph}.  In this note, we study colored
graphs defined by a hereditary sparsity property that depends on the $\rho$-rank.  These
generalize the well-studied $(k,\ell)$-sparse graphs \cite{LS08,FS07}, which are
defined by the condition ``$m'\le kn'-\ell$'' for all subgraphs.

\subsection{Sparse colored graphs}
Let $(G,\bgamma)$ be a colored graph, with $n$ vertices and $m$ edges.  Further,
let $G'$ be an edge-induced subgraph with $n'$ vertices, $m'$ edges, $\rho$-rank $r$,
and $c'_i$ connected components with $\rho$-rank $i$ ($i$ will always be in $\{0,1,2\}$).
Then $(G,\bgamma)$ is defined to be \emph{Ross-sparse}\footnote{An equivalent definition is due to Elissa Ross.}
if, for all edge-induced subgraphs
\begin{equation}\eqlab{ross-sparse}
m' \le 2n' - 3c'_0 -2(c'_1 +c'_2)
\end{equation}
it is \emph{cone-Laman-sparse} if, for all subgraphs
\begin{equation}\eqlab{cone-laman-sparse}
m'\le 2n' - 3c'_0 - c'_1 - c'_2
\end{equation}
and it is \emph{cylinder-Laman-sparse} if, for all subgraphs
\begin{equation}\eqlab{cylinder-laman-sparse}
m'\le 2n' + r - 3c'_0 - 2(c'_1 + c'_2)
\end{equation}
In, in addition \eqref{ross-sparse} (resp. \eqref{cone-laman-sparse}, \eqref{cylinder-laman-sparse})
hold with equality on the whole graph, then $(G,\bgamma)$ is a \emph{Ross-graph},
(resp. \emph{cone-Laman graph}, \emph{cylinder-Laman graph}).

\subsection{Inductive characterization}
These families can be characterized as the graphs generated
from a fixed base by a sequence of several inductive moves.  The moves we use are defined in
\secref{moves} and illustrated in \figref{moves}.
\begin{theorem}\theolab{main}
A colored graph $(G,\bgamma)$ with:
\begin{itemize}
\item $\Z^2$ colors is a Ross-graph if and only if it can be constructed from
a base graph as in \figref{ross-base} using the moves \textbf{(H1c)} and
\textbf{(H2c)} \cite{R11}.
\item $\Z/p\Z$ colors, with $p$ an odd prime,
is a cone-Laman graph if and only if it can be constructed from
a base graph as in \figref{cone-base} using the moves \textbf{(H1c)},
\textbf{(H1c$'$)}, and \textbf{(H2c)}.
\item $\Z$  colors is a cylinder-Laman-graph if and only if it can be constructed from
a base graph as in \figref{cylinder-base} using the moves \textbf{(H1c)} and
\textbf{(H2c)}.
\end{itemize}
\end{theorem}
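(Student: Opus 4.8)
The plan is to prove each of the three equivalences by establishing separately that \textbf{(i)} every colored graph obtained from the appropriate base by the listed moves satisfies the corresponding sparsity count with equality, and \textbf{(ii)} conversely every such tight colored graph can be reduced to a base graph by inverse moves.

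Direction \textbf{(i)} is a direct verification. One checks that the base graphs of \figref{ross-base}, \figref{cone-base} and \figref{cylinder-base} meet \eqref{ross-sparse}, \eqref{cone-laman-sparse} and \eqref{cylinder-laman-sparse} respectively with equality, and that each of \textbf{(H1c)}, \textbf{(H1c$'$)}, \textbf{(H2c)} carries a colored graph satisfying the relevant count to another one satisfying it. Since each move changes $n$ by $1$ and $m$ by $2$, equality on the whole graph is preserved automatically; the content is that no proper subgraph is pushed over its bound. This reduces to bookkeeping of how the newly added vertex and edges change the $\rho$-rank and the component counts $c_0', c_1', c_2'$ of the subgraphs through the new vertex — and for \textbf{(H2c)} one also uses that the color of the re-inserted split edge is forced by the move. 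These are finitely many cases, organized by the $\rho$-rank of the subgraph under consideration.

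Direction \textbf{(ii)} is the substance, and here the idea is to pass to a finite cover. Given a tight colored $(G,\bgamma)$ in one of the families, form a finite cover $\tilde G$: for $\Z/p\Z$ colors take the $p$-fold derived cover; for $\Z$ colors first push the colors into $\Z/N\Z$ for a prime $N$ exceeding the (finitely many) moduli of the $\rho$-images of subgraphs of $G$ and then take the $N$-fold cover; for $\Z^2$ colors one works with the $(\Z/N\Z)^2$-cover, with a little extra care since the $\rho$-rank can be $2$ and the bounds for $\rho$-ranks $1$ and $2$ coincide (this case recovers the construction of \cite{R11}). As set up in \secref{symmetric}, with these choices the $\rho$-rank of a subgraph is read off from the component structure of its lift — a connected subgraph of $\rho$-rank $0$ lifts to as many disjoint copies as the degree of the cover, while the prime / large-prime hypotheses force a subgraph of $\rho$-rank $1$ to lift connectedly — so that, after dividing the classical $(2,3)$-bound by the cover degree, \eqref{cone-laman-sparse} and \eqref{cylinder-laman-sparse} become exactly the statement that $\tilde G$ is $(2,3)$-sparse. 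A colored move on $(G,\bgamma)$ is then precisely a $\Gamma$-equivariant Henneberg move on $\tilde G$ carried out sheet by sheet, so a construction of $(G,\bgamma)$ is the same thing as an equivariant Henneberg construction of $\tilde G$.

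To produce the reduction, observe that $\Gamma$ acts freely on $\tilde G$ by graph automorphisms, so every vertex-orbit has a well-defined degree, equal to the degree of the corresponding vertex of $G$; since every tight graph here has $m < 2n$, its average degree is below $4$, so there is an orbit of degree $2$ or $3$. A degree-$2$ orbit is removed by an inverse \textbf{(H1c)}, except when its removal would lower the $\rho$-rank — which for $\Z/p\Z$ colors can happen through the prime torsion — in which case an inverse \textbf{(H1c$'$)} is used instead; one then checks on the cover that the smaller colored graph is again tight in the family. A degree-$3$ orbit is removed by an inverse \textbf{(H2c)}: delete the orbit and, on each sheet, reconnect one pair of its former neighbors by an edge of the forced color. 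The hard part, which I expect to be the main obstacle, is exactly this last step: one must show that at least one of the (at most three) candidate reconnections keeps the colored count satisfied and that the choice can be made $\Gamma$-invariantly so that it descends to a single colored move. This is the colored counterpart of the classical ``one of the three $1$-extensions is admissible'' lemma; the plan is to transfer the obstruction to the $(2,3)$-sparse cover, apply the classical non-blocking argument there, and then track the effect on the $\rho$-rank and on $c_0', c_1', c_2'$ to confirm membership in the family. After disposing of the finitely many base cases, induction on $n$ completes the proof; the precise shapes of the base graphs and the presence of the second $0$-extension \textbf{(H1c$'$)} are exactly what is needed to guarantee that every reduction step lands back inside the family.
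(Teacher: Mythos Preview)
Your overall architecture matches the paper's closely: lift to a finite cover, identify the colored sparsity condition with Laman-sparsity of the cover (this is \propref{cone-laman-lift}), interpret each colored Henneberg move as a $\Gamma$-equivariant batch of uncolored Henneberg moves on the cover, and reduce $\Z$ colors to $\Z/p\Z$ colors for a large prime $p$. For the forward direction the paper also works on the cover rather than by direct case analysis on $c'_0,c'_1,c'_2$, but either route is fine.

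There is, however, a genuine gap at the inverse \textbf{(H2c)} step. You propose to ``apply the classical non-blocking argument'' on the cover, but that argument only says: after deleting a \emph{single} degree-$3$ vertex $\tilde i_0$, at least one of the three candidate edges, say $\tilde a_\alpha\tilde b_\beta$, can be added back while keeping the cover Laman-sparse. For the move to descend to $(G,\bgamma)$ you must delete the entire fiber over $i$ and add back the entire $\Gamma$-orbit of $\tilde a_\alpha\tilde b_\beta$, i.e.\ $p$ edges at once. Nothing in the classical argument prevents the second, third, \ldots\ edge of the orbit from creating a Laman-circuit. The paper supplies exactly the missing ingredient in \propref{cyclic-eliminate}: if adding the whole orbit produced a Laman-circuit $H$, then by symmetry $H$ would meet at least two orbit-edges, so some translate $\varphi(\gamma')\cdot H$ shares an orbit-edge with $H$; an iterated circuit-elimination along the cyclic group then manufactures a Laman-circuit through exactly one orbit-edge, contradicting the choice of $\tilde a_\alpha\tilde b_\beta$. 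Without this (or an equivalent device) your proof of the reverse \textbf{(H2c)} move does not close.

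A related issue is your choice of cover for Ross graphs. The argument above, and indeed the proof of \propref{cone-laman-lift}, uses that the covering group is \emph{cyclic}. The paper therefore passes from $\Z^2$ not to $(\Z/N\Z)^2$ but to $\Z/p\Z\times\Z/q\Z$ with distinct odd primes $p,q$, which is cyclic of order $pq$; this keeps both \propref{cone-laman-lift} and \propref{cyclic-eliminate} available. Your $(\Z/N\Z)^2$ cover is not cyclic, so even after you fix the gap above, the circuit-elimination step would not transfer as written.
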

Results like this are known as \emph{Henneberg constructions}, since they generalize a
classical technique from \cite{H11} to all matroidal $(k,\ell)$-sparse graphs
\cite{FS07,LS08}.

\subsection{Interpretation of the colored Henneberg 2 move}
The somewhat technical nature of the colored-Henneberg move
\textbf{(H2c)} has a more natural interpretation.  Immerse the colored graph $(G,\bgamma)$ in
$\R^2/\Gamma$ with geodesic edges selected by the colors.  The colored Henneberg move \textbf{(H2c)}
then corresponds to putting the new vertex $n$ on the edge $ij$ that is being split and connecting $n$ to
its other neighbor $k$ using the geodesic specified by the color on the new edge $ik$.  This is a stronger
statement than simply saying that there is \emph{some} choice of coloring for the new edges would preserve
the desired sparsity property.

\subsection{Combinatorial rigidity motivation}
All the families of colored graphs described above arise from instances of the following
geometric problem.  A \emph{$\Gamma$-framework} is a planar structure made of fixed-length bars
connected by joints with full rotational freedom; additionally, it is symmetric under a
representation of $\Gamma$ by Euclidean motions of the plane, which induces a free $\Gamma$-action
by automorphisms on the graph $\tilde{G}$ that has as its edges the bars.  The allowed motions
preserve the length and connectivity of the bars \emph{and} symmetry, but not necessarily the
representation of $\Gamma$.  A \emph{$\Gamma$-framework} is \emph{rigid} when the allowed
motions are all Euclidean isometries and otherwise \emph{flexible}.
Generically, rigidity and flexibility are properties of the
colored graph $G$ that encodes $\tilde{G}$, and the ``Maxwell-Laman question'' (cf. \cite{L70,M64})
is to characterize the combinatorial types of generic, minimally rigid frameworks.

Justin Malestein and the author solved this problem for: \emph{periodic frameworks}
\cite{MT10}, where $\Gamma$ is $\Z^2$ acting by translations with ``flexible representation'' of the
translation lattice;
for \emph{crystallographic frameworks}, where $\Gamma$ is generated by translations and a rotation of order
$2,3,4$ or $6$ \cite{MT11}.  For the periodic case, the minimally rigid \emph{colored-Laman graphs} are defined,
using the notation above, by the counts
\begin{equation}\eqlab{colored-laman-sparse}
m'\le 2n' + \max\{2r-1,0\} - 3c_0 - 2(c_1 + c_2)
\end{equation}
At the time \eqref{colored-laman-sparse} had not been conjectured, nor, to the best of our
knowledge, had matroidal families defined by counts of this form appeared in the combinatorial
literature.  The geometric idea leading to \eqref{colored-laman-sparse} is that a sub-framework
that ``sees'' $r$ flexible periods has $2r-1$ non-trivial degrees of freedom from the lattice
representation, $2n'$ from the coordinates of the vertices, and each connected component has
either two or three ``trivial'' motions commuting with any fixed lattice representation.\footnote{The paper \cite{MT12}
explains this geometric derivation, and its generalization to other groups, in more detail.}

The colored graph families under consideration here also correspond to generic minimally rigid
frameworks in different forced-symmetric models: Ross graphs for \emph{fixed-lattice periodic
frameworks} \cite{R11,MT10}, which are periodic frameworks where the translation lattice is fixed;
cone-Laman graphs for \emph{cone frameworks} \cite{MT11}, where the symmetry group is a
finite-order rotation around the origin; and cylinder-Laman graphs for \emph{cylinder frameworks}
\cite{MT10,MT12} which are periodic with one flexible period.

\subsection{Novelty}
The combinatorial steps in \cite{MT10,MT11} rely on the ``edge-doubling trick'' of
Lovász \cite{LY82} and Recski \cite{R84} and then decompositions obtained by submodular function theory \cite{ER66}.
Seeing as the colored graph families under consideration arise in a planar rigidity setting, it
is natural to ask what the connection they have to the well-studied $(2,3)$-sparse graphs
(shortly \emph{Laman graphs}) characterizing the minimally rigid planar frameworks \cite{L70}.

It is not hard to see that the $\rho$-rank zero subgraphs must be $(2,3)$-sparse.  On the
other hand, the proof method employed here is based around the following proposition that
characterizes a cone-Laman graph in terms of its symmetric cover:
\begin{prop}[\liftprop]\proplab{cone-laman-lift}
Let $p$ be an odd prime, and let $(G,\bgamma)$ be a $\Z/p\Z$-colored graph with $n$ vertices and
$2n-1$ edges.  Then $(G,\bgamma)$ is a cone-Laman graph if an only if its symmetric lift
$(\tilde{G},\varphi)$ is Laman-sparse.
\end{prop}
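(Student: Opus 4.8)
The plan is to prove the two implications separately. Recall that the symmetric lift $(\tilde G,\varphi)$ is the derived graph on vertex set $V\times\Z/p\Z$, so it has $pn$ vertices and $p(2n-1)$ edges and carries a free $\Z/p\Z$-action with quotient $G$. Since $p$ is an odd prime, every nontrivial subgroup of $\Z/p\Z$ is the whole group, so a connected subgraph $G'\subseteq G$ has $\rho$-rank $0$ or $1$, and its full preimage in $\tilde G$ is, respectively, $p$ disjoint copies of $G'$ (when $\rho|_{G'}$ is trivial) or a connected $p$-fold cover of $G'$ (when $\rho|_{G'}$ is onto). Because $p\ge 3$ we have $3/p\le 1$, so for such a $G'$ the Laman bound $m\le 2n-3$ on each connected component of the preimage is equivalent to \eqref{cone-laman-sparse} evaluated on $G'$ --- to $m(G')\le 2n(G')-3$ when $\rho|_{G'}$ is trivial and to $m(G')\le 2n(G')-1$ when it is onto. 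Call this the \emph{covering dictionary} for $\Z/p\Z$-invariant subgraphs.

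The reverse implication follows quickly. If $(\tilde G,\varphi)$ is Laman-sparse then, for each subgraph $G'\subseteq G$, its full preimage is Laman-sparse, and applying the bound componentwise and reading it back through the covering dictionary yields \eqref{cone-laman-sparse} for $G'$; hence $(G,\bgamma)$ is cone-Laman-sparse. A cone-Laman-sparse graph with $n$ vertices and $2n-1$ edges is moreover connected of $\rho$-rank $1$, since a disconnected or $\rho$-rank-$0$ such graph would have $3c'_0+c'_1\ge 2$ on the whole graph, violating \eqref{cone-laman-sparse} because $m=2n-1$. Therefore \eqref{cone-laman-sparse} holds with equality on $G$, and $(G,\bgamma)$ is a cone-Laman graph.

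For the forward implication --- the substantive one --- assume $(G,\bgamma)$ is a cone-Laman graph, so $G$ is connected of $\rho$-rank $1$; we must show $m(\tilde H)\le 2n(\tilde H)-3$ for \emph{every} subgraph $\tilde H\subseteq\tilde G$, not only the $\Z/p\Z$-invariant ones. It is enough to treat connected $\tilde H$, and we may replace $\tilde H$ by the subgraph of $\tilde G$ induced on $V(\tilde H)$; write $H=\pi(\tilde H)$ for its projection, a connected (edge-induced) subgraph of $G$. If $\rho|_H$ is trivial, the cover over $H$ is trivial and the connected $\tilde H$ sits in one sheet, so $\pi$ restricts to an isomorphism of $\tilde H$ with the connected $\rho$-rank-$0$ subgraph $H$ of $G$, whence $m(\tilde H)=m(H)\le 2n(H)-3=2n(\tilde H)-3$ by \eqref{cone-laman-sparse}. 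If $\rho|_H$ is onto, trivialize the cover over a fixed spanning tree $T$ of $H$; then each fiber level $g\in\Z/p\Z$ carries an edge-induced subgraph $H^{(g)}$ of $G$, supported on those edges of $H$ whose level-$g$ copy lies in $\tilde H$, and because $E(\tilde H)$ is partitioned by (level, edge-of-$H$) pairs we have $m(\tilde H)=\sum_g m(H^{(g)})$. Applying \eqref{cone-laman-sparse} to each $H^{(g)}$ and summing bounds $m(\tilde H)$ by $2\sum_g n(H^{(g)})-3\sum_g c^{(g)}_0-\sum_g c^{(g)}_1$, and it remains only to see that this is at most $2n(\tilde H)-3$.

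I expect that last inequality --- the fiber-slicing estimate in the $\rho$-rank-$1$ case --- to be the main obstacle. The subtlety is that the non-tree (``horizontal'') edges of $H$ join distinct fibers, so the slices $H^{(g)}$ are not vertex-disjoint and $\sum_g n(H^{(g)})$ overcounts $n(\tilde H)$. The heart of the matter is that every horizontal edge responsible for an overcount also forces a component of some slice $H^{(g)}$ to drop to $\rho$-rank $0$, or to break into additional components --- so that the correction terms $-3\sum_g c^{(g)}_0-\sum_g c^{(g)}_1$ absorb twice the overcount, the residual $-3$ being provided by $p\ge 3$. Making this accounting precise, including the treatment of isolated vertices in the slices, is where the real work of the proof lies; once it is in place the desired bound follows by summation. (One could instead try to phrase the forward direction as ``$\Z/p\Z$-invariance suffices for Laman-sparsity of a $\Z/p\Z$-cover'' and induct on $n(\tilde H)$ by deleting a vertex of degree at most two, but the remaining minimum-degree-at-least-three case seems to reduce to the same fiber analysis.)
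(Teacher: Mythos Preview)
Your reverse implication and the $\rho$-rank-$0$ case of the forward implication are fine, and your ``covering dictionary'' for $\Z/p\Z$-invariant subgraphs is exactly the right observation.  The gap is in the $\rho$-rank-$1$ case: you set up a fiber-slicing inequality
\[
2\sum_g n(H^{(g)})-3\sum_g c^{(g)}_0-\sum_g c^{(g)}_1\;\le\;2n(\tilde H)-3
\]
and then explicitly decline to prove it (``where the real work of the proof lies'').  The heuristic you offer---that every horizontal edge causing a vertex overcount also forces a rank drop or a component split in some slice---is not obviously true, and even if it is, making the bookkeeping come out with the exact constant $-3$ (and handling slices that see only part of the spanning tree, isolated vertices, etc.) looks genuinely hard.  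As written, this is not a proof.

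More to the point, the approach is unnecessarily complicated.  The paper argues the forward implication by contrapositive and \emph{symmetrizes}: if the lift $\tilde G$ contains a Laman-circuit $H$, pass to its orbit $\tilde H=\bigcup_{\gamma\in\Z/p\Z}\varphi(\gamma)\cdot H$, which is $\Z/p\Z$-invariant.  If the translates are pairwise vertex-disjoint, $\tilde H$ is $p$ copies of $H$, and by \lemref{connected-lift} its quotient is a Laman-circuit in $G$ with trivial $\rho$-image, violating \eqref{cone-laman-sparse}.  Otherwise some two translates meet, and since $p$ is prime the whole orbit is connected; each translate is a Laman-circuit, hence Laman-spanning, and gluing Laman-spanning graphs along shared vertices keeps the union Laman-spanning.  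Together with the dependent edge coming from the circuit $H$ this gives $|E(\tilde H)|\ge 2|V(\tilde H)|-2$.  Writing $|V(\tilde H)|=pn''$ and $|E(\tilde H)|=pm''$ for the quotient, one gets $m''\ge 2n''-2/p>2n''-1$, hence $m''\ge 2n''$, contradicting \eqref{cone-laman-sparse} on a connected $\rho$-rank-$1$ subgraph.

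The moral is that your covering dictionary already handles every invariant subgraph of the lift; the paper's move is simply to reduce the non-invariant case to the invariant one by taking the orbit of a minimal violation, rather than trying to bound an arbitrary $\tilde H$ by slicing it into non-invariant pieces.
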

This connection between colored sparsity and $(k,\ell)$-sparse covers is, to me, as interesting
as \theoref{main}.  As an algorithmic consequence, if $p$ is small relative to the number of
vertices $n$, the algorithmic rigidity questions, as defined in \cite{LS08}, for cone-Laman graphs
can all be solved in $O(n^2)$ time with the \emph{pebble game} \cite{LS08,BJ03}.  It was in this
context that the specialization of \propref{cone-laman-lift} $p=3$ was first observed \cite{BHMT11}.

\propref{cone-laman-lift} doesn't extend, naively\footnote{This had been observed heuristically in
\cite{GH03}, and is discussed in more detail in \cite[Section 19.5]{MT10}.}, at least, to  the
colored-Laman graphs of \cite{MT10} or the $\Gamma$-colored-Laman graphs of \cite{MT11}.
Thus, we also obtain a distinction between the cone-Laman-sparse
colored graph families and the more general ones introduced to understand periodic and crystallographic
frameworks.  Finding the ``right'' generalization of \propref{cone-laman-lift} would be very interesting.

\subsection{Roadmap to the proof of \theoref{main}}
\propref{cone-laman-lift}  allows us to study
the combinatorial structure of cone-Laman graphs via the symmetric
lift $\tilde{G}$. This allows us to apply the entire theory of Laman-sparse
graphs apply.  Colored Henneberg moves on a cone-Laman graph $G$ correspond
to ``symmetrized groups'' of uncolored Henneberg moves on $\tilde{G}$.
The idea of symmetrizing Henneberg moves is not new \cite{S10}, but the approach
taken here is.  The difficult step (\lemref{cone-h2-reverse}) is to show that,
after removing an entire vertex \emph{orbit} in $\tilde{G}$, an entire edge \emph{orbit}
may be added to the remaining graph while maintaining Laman-sparsity.  The
proof makes use of a new circuit-elimination argument
(\propref{cyclic-eliminate}) that avoids a complicated cases analysis.

\subsection{Notations}
When dealing with the families of (uncolored, finite) $(k,\ell)$-sparse graphs \cite{LS08}, we
adopt the following conventions:  \emph{$(k,\ell)$-circuits} are minimal violations
of sparsity; a graph is \emph{$(k,\ell)$-spanning} if it contains a spanning $(k,\ell)$-graph;
a \emph{$(k,\ell)$-block} is a subgraph that is a $(k,\ell)$-graph; and a \emph{$(k,\ell)$-basis} is a maximal
$(k,\ell)$-sparse subgraph.  As is standard, we refer to $(2,3)$-sparse graphs as
\emph{Laman graphs}.

Colored graphs are directed, so an edge $ij$ means a directed edge from $i$ to $j$.  Their symmetric
lifts are undirected, so the order of the vertices in an edge of the lift doesn't indicated
orientation.

\subsection{Acknowledgements}
This work is supported by the European Research Council under the European Union’s Seventh Framework
Programme (FP7/2007-2013) / ERC grant agreement no 247029-SDModels.  I first circulated these
results at the Fields Institute's Rigidity and Rigidity and Symmetry workshops in the Fall of 2011,
and I want to thank the Fields Institute and workshop organizers for their hospitality.

\section{Colored graphs and their lifts}\seclab{symmetric}
In this short section, we quickly review some facts about colored graphs.
Colored graphs are an efficient encoding of a (not-necessarily finite, undirected) graph
$\tilde{G}=(\tilde{V},\tilde{E})$ with a free $\Gamma$-action $\varphi$ acting by automorphisms
with finite quotient. We call the tuple $(\tilde{G},\varphi)$ a \emph{symmetric graph}.

\subsection{Symmetric graphs and colored quotients}
A straightforward specialization of covering space theory, which is given in detail in
our paper \cite[Section 9]{MT11} with Justin Malestein, links colored and symmetric graphs: each
colored graph \emph{lifts} canonically to a \emph{symmetric cover} $\tilde{G}$, and,
after selecting representatives for the $\Gamma$-orbits of vertices, each symmetric
graph  $(\tilde{G},\varphi)$ determines a colored graph $(G,\bgamma)$, with undirected
graph underlying $G$ being $\tilde{G}/\Gamma$.

\subsection{The homomorphism $\rho$}
While the choice of colored quotient is not canonical, the rank of the image of
the induced homomorphism $\rho : \HH_1(G,\Z)\to \Gamma$ is, which justifies the
use of colored graphs in situations where the natural definition is in
terms of symmetric graphs.  To compute $\rho$ on a cycle $C$, we traverse
$C$ in some direction, adding up the colors on the edges traversed forwards and
subtracting the colors on edges traversed backwards.
Since $\rho$ is linear on $\HH_1(G,\Z)$, it is
determined by its images on any cycle basis of $G$.  In particular,
the fundamental cycles of any spanning forest $F$ of $G$ are a cycle basis,
and so we can always assume that the colors are zero on $F$.

\subsection{Edge orbits and colors}
If $(\tilde{G},\varphi)$ is a symmetric graph with colored
quotient $(G,\bgamma)$, we denote vertices  the fiber over a vertex $i\in V(G)$
is given by $\tilde{i}_{\gamma}$, $\gamma\in \Gamma$, and the fiber over
a (colored, oriented) edge $ij\in V(G)$ by $\tilde{i}_{\gamma}\tilde{j}_{\gamma_{ij}+\gamma}$,
for $\gamma\in \Gamma$.

From the definition of the symmetric cover, we see that:
\begin{lemma}\lemlab{colors}
Let $(G,\bgamma)$ be a colored graph, and let $(\tilde{G},\varphi)$ be its lift.  Then
an edge $\tilde{i}_{\gamma}\tilde{j}_{\gamma'}$ is in $(\tilde{G},\varphi)$ if and
only if either there is an (oriented) edge $ij$ in $G$ with color $\gamma' - \gamma$ or
an oriented edge $ji$ in $G$ with color $\gamma-\gamma'$.\eop
\end{lemma}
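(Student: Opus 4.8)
The plan is to prove both directions by directly unwinding the definition of the fiber over a colored oriented edge. Recall that, by construction, the edge set of the lift $(\tilde{G},\varphi)$ is the union, taken over the oriented edges $ij \in E(G)$, of the fibers $\{\tilde{i}_{\gamma}\tilde{j}_{\gamma_{ij}+\gamma} : \gamma \in \Gamma\}$, and that the vertex fibers $\{\tilde{i}_{\gamma}\}_{\gamma \in \Gamma}$ over distinct base vertices $i \in V(G)$ are disjoint because $\varphi$ is a \emph{free} $\Gamma$-action with quotient $G$. The entire argument is a bookkeeping check that matches endpoints of an edge of $\tilde{G}$ against these fibers; the one fact I will lean on repeatedly is that freeness makes the orbit parameter $\gamma$ uniquely recoverable from the vertex $\tilde{i}_{\gamma}$.

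For the ``if'' direction I would simply exhibit the edge. If $ij$ is an oriented edge of $G$ with color $\gamma_{ij} = \gamma' - \gamma$, then setting the orbit parameter equal to $\gamma$ in the fiber formula produces $\tilde{i}_{\gamma}\tilde{j}_{\gamma_{ij}+\gamma} = \tilde{i}_{\gamma}\tilde{j}_{\gamma'}$, which is therefore an edge of $\tilde{G}$. Symmetrically, if $ji$ is an oriented edge of $G$ with color $\gamma_{ji} = \gamma - \gamma'$, then taking the orbit parameter to be $\gamma'$ yields $\tilde{j}_{\gamma'}\tilde{i}_{\gamma_{ji}+\gamma'} = \tilde{j}_{\gamma'}\tilde{i}_{\gamma}$; since $\tilde{G}$ is undirected this is the same edge as $\tilde{i}_{\gamma}\tilde{j}_{\gamma'}$.

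For the ``only if'' direction I would start from an edge $\tilde{i}_{\gamma}\tilde{j}_{\gamma'}$ of $\tilde{G}$, so that it lies in the fiber of some oriented edge $ab \in E(G)$, say as $\tilde{a}_{\delta}\tilde{b}_{\gamma_{ab}+\delta}$ for some $\delta \in \Gamma$. Because its two endpoints lie in the vertex fibers over $a$ and $b$, and these fibers are disjoint over distinct base vertices, matching the unordered endpoint set $\{\tilde{i}_{\gamma},\tilde{j}_{\gamma'}\}$ forces $\{a,b\} = \{i,j\}$. I then split into the two possible orientations. If $a = i$ and $b = j$, freeness lets me read off $\delta = \gamma$ from the fiber over $i$ and $\gamma_{ij}+\gamma = \gamma'$ from the fiber over $j$, whence $\gamma_{ij} = \gamma' - \gamma$; if instead $a = j$ and $b = i$, the same reading gives $\delta = \gamma'$ and $\gamma_{ji}+\gamma' = \gamma$, whence $\gamma_{ji} = \gamma - \gamma'$. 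Either way the asserted colored edge is present in $G$.

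Since the statement is an immediate consequence of the definition of the symmetric cover, I expect no genuine obstacle; the only point requiring care is to invoke freeness of $\varphi$ (equivalently, that each $\Gamma$-orbit meets the fiber over a base vertex in exactly one point) at each matching step. This is what makes the subscript identifications in the ``only if'' direction forced rather than merely possible, and it is also what guarantees that the two orientations listed in the statement exhaust all the ways $\tilde{i}_{\gamma}\tilde{j}_{\gamma'}$ can arise.
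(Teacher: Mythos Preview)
Your proposal is correct and is exactly the intended argument: the paper gives no proof at all for this lemma (it is marked with the end-of-proof symbol immediately after the statement), treating it as an immediate consequence of the definition of the symmetric cover. Your careful unwinding of the fiber description, including the appeal to freeness to make the subscript identifications forced, is precisely the content that the paper leaves implicit.
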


\subsection{Subgraph orbits}
We also require (cf. \cite[Corollary 15]{BHMT11}):
\begin{lemma}\lemlab{connected-lift}
Let $p$ be an odd prime, and let $(G,\bgamma)$ be a $\Z/p\Z$-colored  graph and let $G'$ be a connected
subgraph, and let $\tilde{G}'$ be the lift of $G'$.  Then $\tilde{G}'$ is connected if and only if
$G'$ has non-zero $\rho$-rank.
\end{lemma}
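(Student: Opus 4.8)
The plan is to gauge the colors onto the non-tree edges of $G'$ and then read off the connectivity of the cover directly. First I would fix a spanning tree $T$ of the connected graph $G'$. As recalled in \secref{symmetric}, the fundamental cycles of $T$ are a cycle basis of $\HH_1(G',\Z)$, so after a switching we may assume $\bgamma$ vanishes identically on $T$; this relabels the fibers but leaves the symmetric cover $(\tilde{G}',\varphi)$ unchanged as an abstract graph and does not change the $\rho$-rank, so nothing is lost. Under this normalization the $\rho$-image of a fundamental cycle of a non-tree edge $e=ij$ is exactly its color $\gamma_{ij}$, and hence the image of $\rho$ on $G'$ is precisely the subgroup of $\Z/p\Z$ generated by the colors $\{\gamma_e : e \text{ a non-tree edge of } G'\}$. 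Since $p$ is prime, this subgroup is trivial when all these colors are zero and is all of $\Z/p\Z$ otherwise; in particular the $\rho$-rank of $G'$ is non-zero if and only if some non-tree edge has non-zero color.

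Next I would apply \lemref{colors}. Because every edge of $T$ has color zero, $T$ lifts to the $p$ pairwise vertex-disjoint trees $T_\gamma$ ($\gamma\in\Z/p\Z$), where $T_\gamma$ is the copy of $T$ carried by the vertices $\tilde{i}_\gamma$. A non-tree edge $e=ij$ of color $c$ lifts to the $p$ edges $\tilde{i}_\gamma\tilde{j}_{\gamma+c}$, each joining $T_\gamma$ to $T_{\gamma+c}$. Contracting each connected subgraph $T_\gamma$ to a point, we see that $\tilde{G}'$ is connected if and only if the graph on vertex set $\Z/p\Z$ with an edge $\gamma \sim \gamma+c$ for every non-tree edge of color $c$ and every $\gamma$ is connected. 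Since $\Z/p\Z$ is cyclic of prime order, that graph is connected exactly when the colors appearing on non-tree edges are not all zero, i.e.\ exactly when the $\rho$-rank of $G'$ is non-zero, which is the claim. (Only primality of $p$, not oddness, is used here.)

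There is no genuinely hard step: the only points needing care are verifying that the switching reduction is harmless — which holds because switching is an isomorphism of symmetric covers and preserves $\rho$-rank — and the identification of the $\rho$-image of $G'$ with the subgroup generated by the non-tree-edge colors. Once the colors have been moved off $T$, the equivalence ``$\tilde{G}'$ connected $\iff$ the non-tree colors generate $\Z/p\Z$'' is immediate, and for prime $p$ the right-hand side is just ``some non-tree color is non-zero.''
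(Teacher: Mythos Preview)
Your proof is correct and follows essentially the same approach as the paper: both normalize the colors to vanish on a spanning tree of $G'$, observe that the tree lifts to $p$ disjoint copies, and then use a non-tree edge of nonzero color to connect them. The only cosmetic difference is in the last step---the paper argues that the lift of a single fundamental cycle with nontrivial $\rho$-image is a single long cycle (via the divisibility $t\mid p$), whereas you contract the lifted trees and phrase the same fact as connectivity of a Cayley graph on $\Z/p\Z$; both arguments use only that $p$ is prime.
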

\begin{proof}
By selecting a spanning forest $F$ of $G$ that is also a spanning forest of $G'$, we may assume that
$G'$ is, in fact, all of $G$.  If $(G,\bgamma)$ has $\rho$-rank zero, then, w.l.o.g., we may assume all
the colors are zero, and the Proposition follows from the construction of the lift.  Otherwise,
we can pick a spanning tree $T$ of $G$ and some addition edge $ij$, so that the fundamental cycle $C$
of $ij$ in $T$ has non-trivial $\rho$-image.  The lift of $C$ must be a collection of $t$ cycles,
with $t$ dividing $p$, which is possible only if $t$ is $1$ or $p$; the latter is only possible if
$ij$ is a self-loop with trivial $\rho$-image, contradicting how it was selected.  Thus, the lift of
$(G,\bgamma)$ contains $p$ copies of $T$, connected by a cycle covering $C$.
\end{proof}
A consequence is that if $a$ and $b$ have a common neighbor $i$, the element of $\Gamma$ obtained
by the oriented sum (in the sense of the definition of $\rho$) of the edge colors
on the path from $a$ to $b$ via $i$ can be ``read off'' from the neighbors of the vertices in the fiber over $i$
in the lift $\tilde{G}$.
\begin{lemma}\lemlab{colors2}
Let $(G,\bgamma)$ be a $\Z/p\Z$ colored graph, and let $(\tilde{G},\varphi)$ be its lift.  Let
$i$ be a vertex with neighbors $a$ and $b$ in $G$, and let $\eta$ be the sum of the
colors along the path $a$--$i$--$b$ as defined for the map $\rho$.  Then in the
symmetric lift $\tilde{G}$, the neighbors $\tilde{a}_{\gamma}$ and $\tilde{b}_{\gamma'}$ of
any vertex $\tilde{i}_{\delta}$ in the fiber over $i$ satisfy
$\eta = \gamma' - \gamma$.
\end{lemma}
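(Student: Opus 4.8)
The plan is to reduce everything to the defining property of the symmetric lift, as captured by \lemref{colors}. First I would fix the vertex $i$ and its two neighbors $a$ and $b$ in $G$. By the convention on how $\rho$ is computed, the sum $\eta$ of the colors along the path $a$--$i$--$b$ is $\eta = \gamma_{ia} \text{ (traversed backwards) } + \gamma_{ib}$, i.e. the oriented sum along $a\to i\to b$; one has to be slightly careful about the orientations of the edges $ai$ or $ia$ and $ib$ or $bi$ in the directed graph $G$, but in every case the prescription is the same and produces a single element $\eta\in\Z/p\Z$. (If $a=b$, then $ai$ together with $ia$ is a cycle and $\eta$ is just its $\rho$-image; the argument below still goes through verbatim.)

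Next, take any vertex $\tilde{i}_\delta$ in the fiber over $i$, and let $\tilde{a}_\gamma$ and $\tilde{b}_{\gamma'}$ be its neighbors in $\tilde{G}$ lying over $a$ and $b$ respectively. Applying \lemref{colors} to the edge $\tilde{a}_\gamma \tilde{i}_\delta$: this edge is present exactly because $G$ has an oriented edge between $a$ and $i$ whose color accounts for the difference $\delta - \gamma$ (up to sign, depending on whether the edge is $ai$ or $ia$). Likewise, applying \lemref{colors} to the edge $\tilde{i}_\delta \tilde{b}_{\gamma'}$ expresses $\gamma' - \delta$ in terms of the color on the edge between $i$ and $b$. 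Adding the two relations, the middle index $\delta$ cancels, and the right-hand side is precisely the oriented color sum along $a$--$i$--$b$, which is $\eta$ by definition. Hence $\gamma' - \gamma = \eta$, as claimed.

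The only real bookkeeping obstacle is handling the four orientation cases for the pair of edges incident to $i$ in the directed graph $G$ (edge into or out of $i$ on each side), and making sure the sign conventions in \lemref{colors} line up with the sign convention used to define $\rho$ on the path. This is routine once one writes the membership condition from \lemref{colors} with explicit signs; I do not expect any genuine difficulty, and the self-loop case $a=b$ is subsumed since \lemref{colors} already covers both orientations of a loop. I would present the computation for one orientation case in full and remark that the others are identical after relabeling.
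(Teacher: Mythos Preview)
Your argument is correct and is actually more direct than the paper's. You apply \lemref{colors} separately to the two edges $\tilde{a}_{\gamma}\tilde{i}_{\delta}$ and $\tilde{i}_{\delta}\tilde{b}_{\gamma'}$, obtain $\delta-\gamma$ and $\gamma'-\delta$ as the (signed) colors on the two edges of the path, and sum to get $\eta=\gamma'-\gamma$; the four orientation cases really do all collapse as you say. The paper instead adds an auxiliary oriented edge $ab$ with color $\eta$, observes that the resulting triangle $a$--$i$--$b$--$a$ has $\rho$-rank zero, invokes \lemref{connected-lift} to conclude its lift is $p$ disjoint triangles, and then reads off the relation from \lemref{colors} applied to the fiber over the new edge $ab$. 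Your route avoids the detour through \lemref{connected-lift} entirely and hence does not depend on $p$ being prime; the paper's route has the mild conceptual advantage of packaging the sign bookkeeping into the single observation that the triangle has trivial $\rho$-image.
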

\begin{proof}
Add the oriented edge $ab$ to $(G,\bgamma)$ with color $\eta$ to get a colored
graph $(H,\bgamma)$.  The subgraph with the path from $a$ to $b$ via $i$ and $ab$ has, by
construction, $\rho$-rank zero.  Thus,
\lemref{connected-lift} says that its lift is $p$ vertex disjoint triangles.  The
lemma follows from applying \lemref{colors} to the fiber over $ab$ in the lift
$\tilde{H}$.
\end{proof}

\section{The lift of a cone-Laman graph}\seclab{cone-lift}
This next proposition, which is a generalization of \cite[Lemma 6]{BHMT11},
is our basic technical tool.

\liftprop
\begin{proof}
We prove the contrapositive in both directions.  First suppose that $(G,\bgamma)$ is not
cone-Laman sparse.  Minimal violations (i.e., cone-Laman-circuits) come in two
types: Laman-circuits with trivial $\rho$-image and subgraphs with non-trivial image, $n'$
vertices and $2n'$ edges.  \lemref{connected-lift} says that the first type lifts to $k$
copies of itself, blocking Laman-sparsity in the lift $(\tilde{G},\varphi)$.  The
second type lifts to a subgraph of  $(\tilde{G},\varphi)$ that has $pn'$ vertices and
$2pn'$ edges, which is certainly not Laman-sparse.

Now we suppose that the lift $(\tilde{G},\varphi)$ spans some Laman-circuit $H$.
Denote by $H_\gamma$ the image of $H$ under $\varphi(\gamma)$, so that the orbit $\tilde{H}$
of $H$ is the union of the $H_\gamma$.  If the $H_\gamma$ are all disconnected from each other,
then $\tilde{H}$ is, by \lemref{connected-lift}, the lift of a Laman-circuit with trivial $\rho$-image.
Otherwise, again using \lemref{connected-lift}, $\tilde{H}$ is a graph on $n'$ vertices
made by gluing $p$ Laman-circuits together in a ring-like fashion along Laman-sparse subgraphs.  Thus, it
has at most $p-3$ Laman degrees of freedom and at least $k$ Laman-dependent edges.  In other
words, a Laman-basis of $\tilde{H}$ has at least $2n' - p$ edges and there are $p$ other
edges, implying that it has at least $2n'$ edges in total.
\end{proof}
Our other technical tool is:
\begin{prop}\proplab{cyclic-eliminate}
Let $p$ be an odd prime, and let  $(\tilde{G},\varphi)$ be a symmetric graph with a
$\Z/p\Z$-action $\varphi$. Suppose that $H$ is a Laman-circuit in $\tilde{G}$, and suppose that, for some
$\gamma'\in \Z/p\Z$, $\varphi(\gamma')\cdot H$ and $H$ intersect on an edge $ij$.  Then there is a
Laman-circuit in $\tilde{G}$ that goes through one edge in the orbit of $ij$.
\end{prop}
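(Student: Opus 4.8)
The plan is to run a circuit-elimination argument in the Laman matroid, but to organize it cyclically around the $\Z/p\Z$-orbit rather than treating the pairwise intersections one at a time. Let $H_\gamma = \varphi(\gamma)\cdot H$ for $\gamma \in \Z/p\Z$, so the $p$ graphs $H_0, H_1, \dots, H_{p-1}$ are all Laman-circuits, each a translate of the previous one by $\varphi(1)$. By hypothesis $H_0$ and $H_{\gamma'}$ share the edge $ij$; applying $\varphi$ we get that $H_\gamma$ and $H_{\gamma+\gamma'}$ share the edge $\varphi(\gamma)\cdot ij$, which lies in the orbit of $ij$. Since $p$ is prime and $\gamma' \neq 0$, the map $\gamma \mapsto \gamma + \gamma'$ is a $p$-cycle on $\Z/p\Z$, so after relabeling we may assume consecutive circuits $H_\gamma$ and $H_{\gamma+1}$ share an edge $e_\gamma$ in the orbit of $ij$, and the $e_\gamma$ are (a priori) not necessarily distinct but are all orbit-mates of $ij$.

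Now I would build the desired small circuit by repeated circuit elimination along this cycle. Recall the basic fact in any matroid: if $C_1, C_2$ are circuits and $e \in C_1 \cap C_2$, then $C_1 \cup C_2 \setminus e$ contains a circuit, and moreover it contains a circuit through any prescribed element of $C_1 \triangle C_2$. Starting from $H_0$ and $H_1$, eliminate $e_0$ to get a circuit $C^{(1)} \subseteq H_0 \cup H_1 \setminus e_0$ that still passes through $e_{p-1} = e_{-1}$ (which lies in $H_0$ but, after relabeling, I should track which orbit edge survives) — the point is to keep one orbit-edge alive at each stage. Inductively, having produced a circuit $C^{(t)} \subseteq H_0 \cup \dots \cup H_t$ passing through $e_t$, eliminate $e_t$ against $H_{t+1}$ to get $C^{(t+1)} \subseteq H_0 \cup \dots \cup H_{t+1}$ passing through $e_{t+1}$. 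After $p-1$ steps we have a circuit $C = C^{(p-1)}$ contained in $\bigcup_\gamma H_\gamma = \tilde H$, passing through $e_{p-1}$, and — this is the key bookkeeping — by construction $C$ has been ``de-looped'' at every orbit-edge $e_0, \dots, e_{p-2}$, so $C$ meets the orbit of $ij$ in exactly the single edge $e_{p-1}$.

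The main obstacle is the bookkeeping in the last sentence: I need to guarantee that each elimination actually removes the orbit-edges I want removed and never reintroduces one. The subtlety is that $C^{(t+1)}$ lives inside $C^{(t)} \cup H_{t+1} \setminus e_t$, so it automatically avoids $e_t$; but I must also ensure it avoids $e_0, \dots, e_{t-1}$, which were already killed in earlier stages and never appear in $H_{t+1}$ (since $H_{t+1}$ only meets $H_t$, among the earlier circuits, and only in $e_t$ — here is where I use \lemref{connected-lift} or rather the orbit structure: $H_{t+1}$ and $H_s$ for $s \le t-1$ are too far apart in the ring to share the edge $ij$-orbit-mates that were eliminated). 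I will need a short lemma ruling out ``long-range'' intersections, or alternatively I can weaken the conclusion to ``at most one orbit edge'' and observe that a Laman-circuit cannot be edge-disjoint from the orbit of $ij$ once it uses the $H_\gamma$, forcing exactly one. A secondary point to check is that the elimination can always be steered through a chosen surviving orbit-edge: this holds because the strong circuit elimination axiom lets me prescribe the retained element, and $e_{t+1} \in H_{t+1} \subseteq C^{(t)} \cup H_{t+1}$ while $e_{t+1} \neq e_t$, so $e_{t+1}$ is available to retain.
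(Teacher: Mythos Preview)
Your circuit-elimination strategy is exactly the alternative the paper sketches in the remark following its proof, so the overall shape is right. But the bookkeeping obstacle you flag is a genuine gap, and neither of your two proposed patches closes it.

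The claim that ``$H_{t+1}$ only meets $H_t$, among the earlier circuits, and only in $e_t$'' is not justified by the hypotheses: all you know is that $H$ and $\varphi(\gamma')\cdot H$ share the edge $ij$. Nothing prevents $H$ from containing many edges of the $ij$-orbit, in which case every $H_{t+1}$ contains several orbit edges, possibly including some $e_s$ with $s<t$ that you already eliminated. So $C^{(t+1)}\subseteq (C^{(t)}\cup H_{t+1})\setminus e_t$ can pick those edges back up. Your fallback---weaken to ``at most one'' and then argue a circuit in $\bigcup_\gamma H_\gamma$ cannot avoid the orbit entirely---also fails: $\tilde H=\bigcup_\gamma H_\gamma$ can certainly contain Laman-circuits disjoint from the $ij$-orbit (indeed the paper's remark explicitly allows this outcome for the circuit-based argument, calling it a ``slightly different conclusion'').

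The paper's actual proof sidesteps this by working with \emph{bases} rather than circuits. It shows $\tilde H$ is Laman-spanning (gluing the $H_\gamma$ along shared vertices) and then builds, iteratively, a Laman basis $\tilde L$ of $\tilde H$ that avoids the entire orbit of $ij$. The inductive step takes a Laman basis $L_t$ of $H_0\cup\cdots\cup H_t$ avoiding the orbit edges seen so far and extends it over $H_{t+1}$; the crucial point is that any previously-eliminated orbit edge that happens to lie in $H_{t+1}$ is already in the Laman span of $L_t$ (since it lies in some earlier $H_s$), so it can be dropped when forming $L_{t+1}$. This is precisely the ``memory'' your circuit $C^{(t)}$ lacks. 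Once $\tilde L$ is built, the fundamental Laman-circuit of $ij$ with respect to $\tilde L$ goes through exactly one orbit edge, namely $ij$ itself.
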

\begin{proof}
As in the proof of \propref{cone-laman-lift}, denote by $H_\gamma$ in the images of $H$ under $\varphi(\gamma)$
and the whole orbit by $\tilde{H}$ and adopt similar notation for $ij$.  Because $k$ is prime, $\gamma'$ has order $k$,
so we may, w.l.o.g., assume $\gamma=1$.  It follows that $H_\gamma\cap H_{\gamma+1}$  is never empty, so we may
assume, w.l.o.g., that $(ij)_\gamma\in H_\gamma\cap H_{\gamma+1}$.  Since Laman-circuits are Laman-spanning,
and $\tilde{H}$ is made by gluing Laman-circuits (the $H_\gamma$) along at least two vertices (the endpoints
of $(ij)_\gamma$), it follows from \cite[Theorem 5]{LS08} that $\tilde{H}$ is Laman-spanning as well.

The Proposition will follow from showing that $\tilde{H}$ has a Laman-basis $\tilde{L}$ that doesn't contain any edge
in the orbit of $ij$, since the fundamental circuit of $ij$ in $\tilde{L}$ produces the desired circuit.  We do this
by refining the argument above.  Let $L_1 = H - ij$.  Since $H$ is a Laman-circuit, $L_1$ is a Laman-graph, and it
contains $(ij)_1$.  Since $H_1$ is a Laman-circuit, $H'_1 = H_1 - (ij)_1$ is Laman-spanning, and thus, so is
$L_1\cup H'_1$.  Because $(ij)_1$ is in the span of the Laman-block $H'_1$, and, if $ij$ is
present in $H_1$, it is in the span of the  Laman-block $L_1$,  $L_2$, $L_1\cup H'_1$
has a Laman-basis  $L_2$ that does not contain $ij$ or $(ij)_1$.  Repeating this process $p$ times, we obtain the
desired $\tilde{L}$.
\end{proof}

\paragraph{Remark} The proof of \propref{cyclic-eliminate} is written from the perspective of bases, but
it can be argued directly from the perspective of circuits as well, obtaining a slightly different conclusion.
We eliminate $ij$ from the intersection of $H$ and $H_1$ to obtain a circuit $C_1$ in $\tilde{G}$ that does not go
through $ij$ but does contain $(ij)_1$.  Iterating we obtain a family of circuits $C_2,C_3,\ldots,C_t$
such that $C_t'$ does not contain $(ij)_\gamma$ for $\gamma<t'$.  The process either ends at some $t<p$,
yielding a circuit disjoint from the orbit of $ij$ or at $C_p$, which contains only $(ij)_{k-1}$ from
the orbit of $ij$.

\section{Colored Henneberg moves}\seclab{moves}
In this section we define the Henneberg moves that we will use, and the base graphs for each sparsity type.

\subsection{The uncolored Henneberg moves}
If we forget about the colors, these are just the generalized Henneberg moves that can be found in
\cite{LS08,FS07}; we will call these
\emph{uncolored Henneberg moves} \textbf{(H1)}, \textbf{(H1$'$)}, and \textbf{(H2)} to distinguish them from the colored
moves defined here.  The following facts may be found in \cite{LS08,FS07}:
\begin{prop}[][{\cite{LS08,FS07}}]\proplab{uncolored-moves}
The uncolored Henneberg moves:
\begin{itemize}
\item Preserve $(2,1)$- and $(2,2)$-sparsity.
\item Perserve Laman-sparsity when the neighbors of the new vertex are all distinct.
\item Generate exactly $(2,2)$-graphs, starting from a doubled edge.
\item Generate exactly $(2,1)$-graphs, starting from a vertex with a single self-loop.
\end{itemize}
\end{prop}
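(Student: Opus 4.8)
The plan is to read the four items as two ingredients, each used twice: the first two are ``forward'' preservation statements, proved by a short subgraph count, and the last two are if-and-only-if statements whose easy direction is immediate from preservation and whose hard direction is an inductive reduction.

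For preservation I would check the three moves one at a time. Let $G$ be $(2,\ell)$-sparse, let $G'$ be the result of a move, and let $v$ be the new vertex. Fix a subgraph $H\subseteq G'$. If $v\notin H$, then $H$ is a subgraph of $G$ — for \textbf{(H2)}, of $G$ with the split edge removed — hence sparse. If $v\in H$, put $H'=H-v$ and $d=d_H(v)$. For \textbf{(H1)} and \textbf{(H1$'$)} one has $d\le 2$ and $m(H)=m(H')+d$; for \textbf{(H2)}, if $d=3$ then all three neighbours of $v$ lie in $H$, so the split edge $ab$ has both endpoints in $H'$, the graph $H'+ab$ is a subgraph of $G$, and $m(H)=m(H')+3=(m(H')+1)+2$. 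Since $n(H)=n(H')+1$ and deleting a vertex with a non-loop incident edge cannot decrease the number of components, the bound $m(H')\le 2n(H')-\ell c(H')$ — applied to $H'$, resp.\ to $H'+ab$ — propagates to $m(H)\le 2n(H)-\ell c(H)$; the loop bookkeeping needed for \textbf{(H1$'$)}, which is relevant to $(2,1)$-sparsity but not to $(2,2)$-sparsity, is absorbed by the extra slack available when $\ell\le 1$. For Laman-sparsity $\ell=3$ and the same arithmetic goes through provided $H$ contains no two parallel edges at $v$ and no two-vertex subgraph through $v$, which is exactly the hypothesis that the neighbours of the new vertex are distinct.

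For the generation statements, one direction is immediate from preservation together with the facts that a doubled edge is a $(2,2)$-graph and a single self-loop is a $(2,1)$-graph. For the converse I would reduce: a $(2,\ell)$-graph ($\ell\in\{1,2\}$) other than the base has, by the count $m=2n-\ell$ with $\ell\ge 1$ forcing average degree below $4$, a vertex $v$ of degree at most $3$; if $\deg v\le 2$, deleting $v$ and its incident edges is an inverse \textbf{(H1)} or \textbf{(H1$'$)} and yields a smaller $(2,\ell)$-graph, and we induct. If $\deg v=3$, we want to delete $v$ and add back one edge among its former neighbours so that the result is again a $(2,\ell)$-graph; this is an inverse \textbf{(H2)}, and showing that an admissible edge always exists is the one genuinely nontrivial point — I expect it to be the main obstacle. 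I would argue it by circuit counting: $G-v$ is $(2,\ell)$-sparse but one edge short of tight, so if adding each of the three candidate edges to $G-v$ created a $(2,\ell)$-circuit, then the three former neighbours would pairwise lie in $(2,\ell)$-blocks of $G-v$; since any two of these blocks share a vertex, for $\ell\in\{1,2\}$ their union is again a $(2,\ell)$-block $B$, and then $B$ together with $v$ and its three edges is a subgraph of $G$ with $m(B)+3>2(n(B)+1)-\ell$ edges, contradicting $(2,\ell)$-sparsity of $G$. (That pairwise sharing a vertex suffices is special to $\ell\le 2$; the analogous Laman step needs two shared vertices, which is why Laman-generation is not claimed here.)

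Finally I would remark that all of this is the specialization to $(k,\ell)=(2,1)$ and $(2,2)$ of the inductive constructions for matroidal $(k,\ell)$-sparse graphs in \cite{LS08,FS07}; in the final write-up it therefore suffices to invoke those references and note that the base graphs and the degree bound forced by $m=kn-\ell$, $\ell\ge 1$, are precisely what their arguments require.
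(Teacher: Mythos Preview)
The paper does not prove this proposition; it is stated with the attribution ``\cite{LS08,FS07}'' and prefaced by ``The following facts may be found in \cite{LS08,FS07}''. Your proposal correctly anticipates this in its final paragraph, and your sketch is a faithful reconstruction of the standard arguments in those references: forward preservation by a direct subgraph count, and the reverse \textbf{(H2)} step via the block-union property (two $(2,\ell)$-blocks sharing a vertex have $(2,\ell)$-tight union when $\ell\le 2$). So your proposal is correct and, in its concluding remark, matches the paper's treatment exactly.
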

\begin{figure}[htbp]
\centering
\subfigure[]{\includegraphics[width=0.25\textwidth]{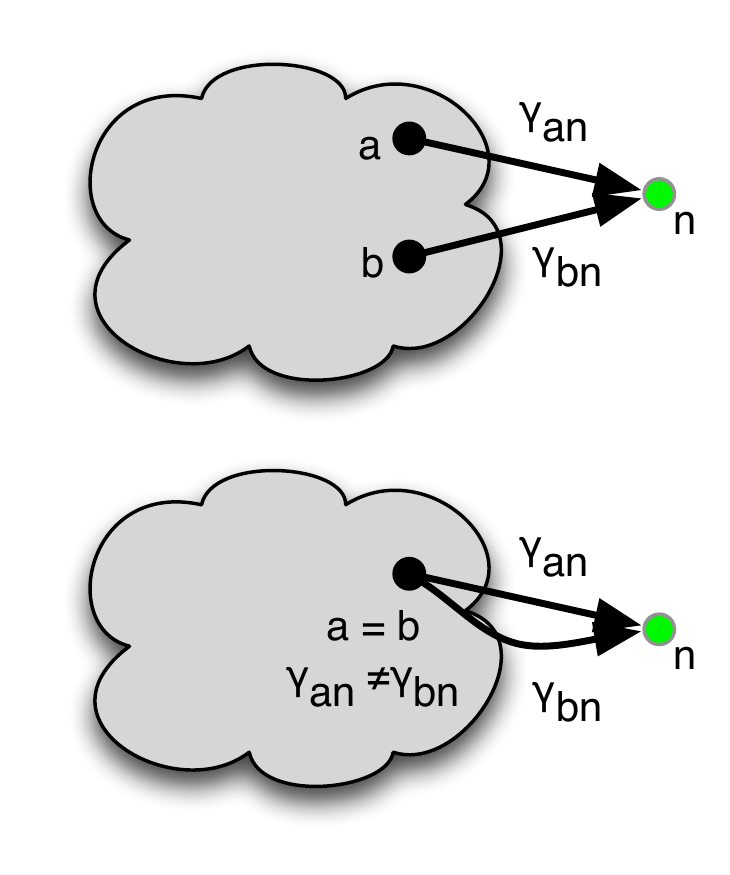}}
\subfigure[]{\includegraphics[width=0.25\textwidth]{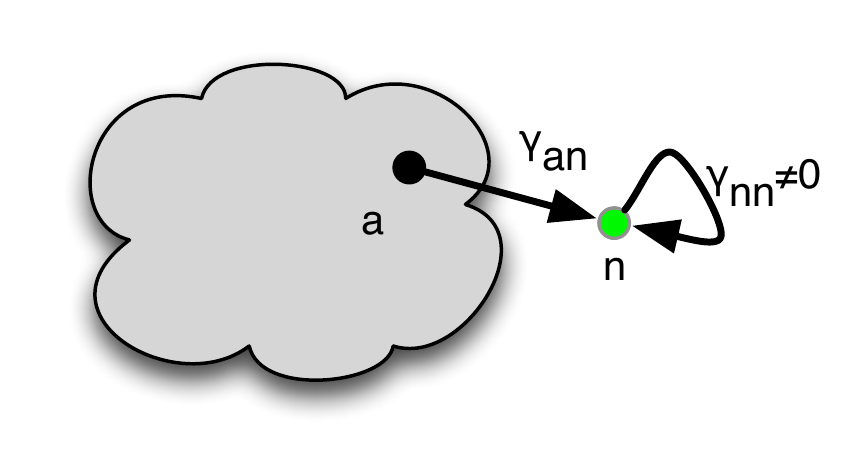}}
\subfigure[]{\includegraphics[width=0.25\textwidth]{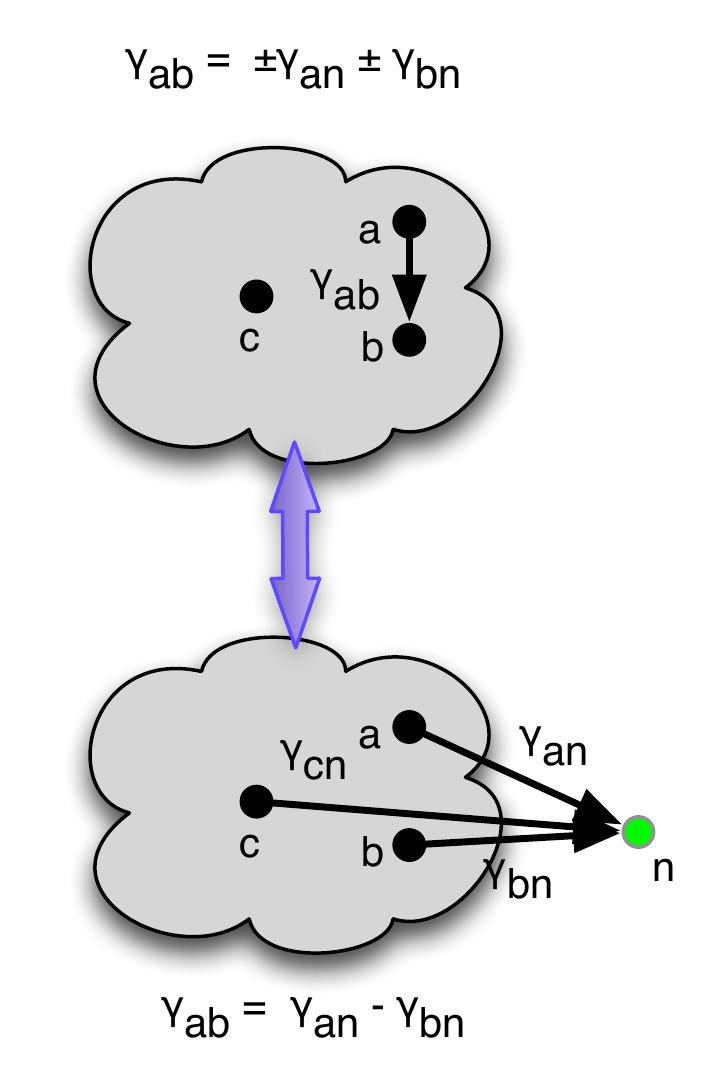}}
\caption{The colored Henneberg moves: (a) \textbf{(H1c)}; (b) \textbf{(H1c$'$)}; (c) \textbf{(H2c)}.}
\label{fig:moves}
\end{figure}

\subsection{Forward and reverse moves}
All the moves have forward and reverse directions.  In each direction, we specify the allowed
orientations and colors of any new edges.  The forward moves can always be applied, while the
reverse ones work only on a vertex of the appropriate degree.

\subsection{The (H1c) and (H1c$'$) moves}
We start with the simpler two moves.  These involve adding one new vertex $n$ and two new edges.  For \textbf{(H1c)},
$n$ is connected to the existing graph by two edges $an$ and $bn$; by convention we orient them into $n$, and, require that
if $a=b$, the colors are different.  The reverse move just removes a degree two vertex.  The move \textbf{(H1c$'$)} prime,
which we give the suggestive mnemonic ``lollipop move'', connects the new vertex $n$ to the existing graph by one new
edge $an$ with arbitrary color and adds a self-loop on $n$ with non-zero color.  The reverse move simply removes
a vertex incident on one self-loop and one other edge.
\begin{figure}[htbp]
\centering
\subfigure[Cone-Laman, $\gamma\in \Z/k\Z$]{\includegraphics[width=0.15\textwidth]{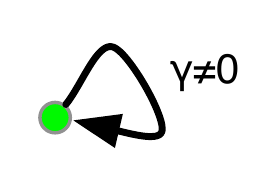}\label{fig:cone-base}}
\hfill
\subfigure[Cylinder-Laman, $\gamma\in \Z$]{\includegraphics[width=0.15\textwidth]{cone-laman-base}\label{fig:cylinder-base}}
\hfill
\subfigure[Ross graphs,
$\gamma_{12}, \gamma_{(12)'}\in \Z^2$]{\includegraphics[width=0.15\textwidth]{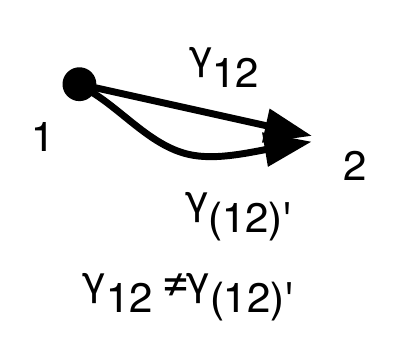}\label{fig:ross-base}}
\caption{The base cases.}
\label{fig:bases}
\end{figure}
\paragraph{Remark} The convention regarding the orientation in the forward direction doesn't impose a
restriction, since $\rho$-rank is preserved if we change the orientation of an edge and the
sign of the color on the edge at the same time.
\subsection{The (H2c) move}
The \textbf{(H2c)} move, which adds a new vertex $n$, removes one edge, and adds three new ones is slightly more
complicated.  Let $ab$ be an edge with color $\gamma_{ab}$, and let $c$ be some other vertex.  Note that $a$, $b$, and
$c$, are not necessarily distinct.  The forward \textbf{(H2c)} move removes the edge $ab$ and replaces it with edges
$an$ and $nb$ colored such that $\gamma_{an} - \gamma_{bn} = \gamma_{ab}$; an edge $ac$ with arbitrary color is also
added.  If any of $a$, $b$, and $c$ are the same, we further require that any parallel edges added have pairwise
different colors.

The reverse direction is slightly more complicated.  We don't have control over the orientation of the
edges at the degree $3$ vertex, and there are, potentially, several possibilities of the endpoints of the
edge to put back, as well as a number of potential colors.
We start with a degree-three vertex $n$, with neighbors $a$, $b$, and $c$, which, again, may not be distinct.
A reverse \textbf{(H2c)} move removes $n$ and adds back $ab$ (resp. $ac$, $bc$) with some orientation and
color that is the oriented sum, in the sense of the map $\rho$'s definition, of the oriented path short-circuited
by $ab$ (resp. $ac$, $bc$).
\paragraph{Remark}  In the proof that the reverse \textbf{(H2c)} move preserves cone-Laman sparsity,
we will see that the color of the replacement edge is determined by the correspondence found
in \lemref{colors}.

\subsection{Base graphs}
We also have to specify the base cases of our induction.  These are shown in \figref{bases}.

\section{\theoref{main} for cone-Laman graphs}\seclab{cone-laman-proof}
With the definition of the moves complete, we are in a position to
prove \theoref{main} for cone-Laman graphs.  This occupies the rest
of the section.  To set the notation, let $p$ be an odd prime and
let $(G,\bgamma)$ be a cone-Laman graph with $\Z/p\Z$ colors.  The
new vertex will be $n$.

\subsection{Applicability of the colored Henneberg moves}
Because the colors come from $\Z/p\Z$, the $\rho$-rank of any subgraph is
always zero or one.  Since a cone-Laman graph has $n$ vertices and $2n-1$ edges,
there is always a vertex of degree two or three.  Thus, we need only to
check that the moves defined in \secref{moves} preserve the cone-Laman
property in the forward and reverse directions.

\subsection{The base case}
It is readily seen that any of the claimed base cases is a cone-Laman graph. \eop

\subsection{Colored Henneberg moves and the symmetric lift}
We may interpret a colored Henneberg \textbf{(H1c)} or \textbf{(H2c)} move applied to $G$ as a
group of $p$ uncolored Henneberg moves applied to $\tilde{G}$.

\begin{lemma}\lemlab{h1c-lift}
Let $(H,\bgamma)$ be the colored graph obtained from $(G,\bgamma)$
by applying an \textbf{(H1c)} move that adds a new vertex $n$ and
edges $an$ and $bn$ with colors $\gamma_{an}$ and $\gamma_bn$.  Then
the symmetric lift $\tilde{H}$ is obtained from the lift $\tilde{G}$
by applying $p$ \textbf{(H1)} moves.
\end{lemma}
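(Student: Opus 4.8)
The plan is to unwind the definitions of the lift and of the \textbf{(H1c)} move, and check that the edges and vertices added to $\tilde{G}$ are precisely an orbit of $p$ uncolored \textbf{(H1)} moves. First I would describe the vertices added in the lift: by the construction of the symmetric cover, adding a single vertex $n$ to $G$ adds the fiber $\{\tilde{n}_\delta : \delta\in\Z/p\Z\}$, i.e.\ exactly $p$ new vertices, one for each group element. Since the \textbf{(H1)} move adds one vertex at a time, there is no choice but to add these $p$ vertices, and the only question is whether each acquires exactly two edges to pre-existing distinct vertices, as an \textbf{(H1)} move requires.

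Next I would use \lemref{colors} to identify the lifted edges. The colored move adds oriented edges $an$ with color $\gamma_{an}$ and $bn$ with color $\gamma_{bn}$. By \lemref{colors}, the edge $an$ with color $\gamma_{an}$ lifts to the $p$ edges $\tilde a_{\gamma}\tilde n_{\gamma_{an}+\gamma}$ for $\gamma\in\Z/p\Z$; rewriting with $\delta = \gamma_{an}+\gamma$, each $\tilde n_\delta$ is joined to exactly one vertex $\tilde a_{\delta-\gamma_{an}}$ in the fiber over $a$. Likewise $\tilde n_\delta$ is joined to exactly one $\tilde b_{\delta-\gamma_{bn}}$ in the fiber over $b$. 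So in $\tilde{H}$ the vertex $\tilde n_\delta$ has degree exactly two, with its two neighbors lying in the fibers over $a$ and $b$ respectively. Hence, ordering the group elements $\delta$ arbitrarily, we may perform the $p$ moves one after another: the $\delta$-th move adds $\tilde n_\delta$ together with the two edges to $\tilde a_{\delta-\gamma_{an}}$ and $\tilde b_{\delta-\gamma_{bn}}$, and after all $p$ moves we have exactly $\tilde{H}$.

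The one point that needs a small argument is that the two neighbors of $\tilde n_\delta$ are \emph{distinct} vertices of $\tilde{G}$, so that each individual move is a legitimate \textbf{(H1)} move (recall the uncolored \textbf{(H1)} move forbids a loop). If $a\neq b$ in $G$ this is immediate since the fibers over $a$ and $b$ are disjoint. If $a=b$, the \textbf{(H1c)} convention requires $\gamma_{an}\neq\gamma_{bn}$; then $\delta-\gamma_{an}\neq\delta-\gamma_{bn}$ in $\Z/p\Z$, so the two neighbors $\tilde a_{\delta-\gamma_{an}}$ and $\tilde a_{\delta-\gamma_{bn}}$ are again distinct vertices in the common fiber. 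Either way, every one of the $p$ moves is a valid \textbf{(H1)} move on the current graph, which is what the lemma asserts. I do not expect a serious obstacle here; the only thing to be careful about is the bookkeeping with the color-shift reindexing in \lemref{colors} and the degenerate case $a=b$, both of which are routine.
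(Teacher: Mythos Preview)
Your proof is correct and follows essentially the same approach as the paper, just with much more explicit bookkeeping: the paper's proof is a single sentence observing that each vertex in the fiber over $n$ has degree two. One small structural difference is that the paper defers the verification that the two neighbors of $\tilde n_\delta$ are distinct to the proof of \lemref{cone-h1-forward}, whereas you fold that check in here; either placement is fine.
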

\begin{proof}
The degree of the vertices in the fiber over the new vertex $n$ are all two,
and the uncolored move \textbf{(H1)} adds a degree two vertex.
\end{proof}

\begin{lemma}\lemlab{h2c-lift}
Let $(H,\bgamma)$ be the colored graph obtained from $(G,\bgamma)$
by applying an \textbf{(H2c)} move that adds a new vertex $n$,
removes an edge $ab$ with color $\gamma_{ab}$, and adds new edges
$an$, $bn$, and $cn$ with colors such that $\gamma_{an} - \gamma_{bn} = \gamma_{ab}$.
Then the symmetric lift $\tilde{H}$ is obtained from the lift $\tilde{G}$
by applying $p$ \textbf{(H2)} moves.
\end{lemma}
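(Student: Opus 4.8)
The plan is to verify that, under the correspondence between colored Henneberg moves and symmetrized groups of uncolored moves, an \textbf{(H2c)} move on $(G,\bgamma)$ lifts exactly to $p$ copies of the uncolored \textbf{(H2)} move, one on each fiber. First I would describe the lift of the edge $ab$ being split. By \lemref{colors}, the edge $ab$ with color $\gamma_{ab}$ lifts to the $p$ edges $\tilde a_\gamma \tilde b_{\gamma+\gamma_{ab}}$ for $\gamma \in \Z/p\Z$. The \textbf{(H2c)} move deletes $ab$ and its fiber, adds the orbit of the new vertex $n$ (namely the $p$ vertices $\tilde n_\delta$), and adds the orbits of $an$, $bn$, $cn$. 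Again by \lemref{colors}, with the color relation $\gamma_{an} - \gamma_{bn} = \gamma_{ab}$, each vertex $\tilde n_\delta$ in the fiber acquires exactly three new neighbors: one in the fiber over $a$, one over $b$, and one over $c$; and crucially the neighbor in the fiber over $a$ is $\tilde a_{\gamma_{an}+\delta}$ and the one over $b$ is $\tilde b_{\gamma_{bn}+\delta}$, which are precisely the two endpoints of the deleted lifted edge $\tilde a_{\gamma_{an}+\delta}\,\tilde b_{\gamma_{an}+\delta - \gamma_{ab}} = \tilde a_{\gamma_{an}+\delta}\,\tilde b_{\gamma_{bn}+\delta}$. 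So on each fiber the local picture is: remove an edge, add a degree-three vertex subdividing that edge and joined to a third vertex — exactly the \textbf{(H2)} move.

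The key steps, in order, are: (1) invoke \lemref{colors} to write down the fiber of $ab$ in $\tilde G$; (2) write down the fiber of each new edge $an$, $bn$, $cn$ in $\tilde H$, again via \lemref{colors}; (3) use the color identity $\gamma_{an}-\gamma_{bn}=\gamma_{ab}$ to check that the two fiber-neighbors of $\tilde n_\delta$ over $a$ and $b$ coincide with the endpoints of one lifted copy of $ab$, so the $p$ edge-deletions and $p$ vertex-additions pair up correctly into $p$ subdivisions; (4) observe that the $p$ groups of moves are vertex-disjoint in the fibers over $n$, hence can be performed in any order as a sequence of $p$ genuine uncolored \textbf{(H2)} moves carrying $\tilde G$ to $\tilde H$. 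It also helps to note, as in \lemref{h1c-lift}, that all vertices in the fiber over $n$ have degree exactly three, which is what \textbf{(H2)} produces.

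The main obstacle, and the only place any care is needed, is the degenerate cases where $a$, $b$, $c$ are not all distinct (or when $ab$ is a self-loop). Here one must check that the lift is still an honest \textbf{(H2)} move on each fiber rather than something creating a loop or a multi-edge in a forbidden way: for instance if $a = c$ the two new edges $an$ and $cn$ have, by hypothesis, different colors, so in the lift $\tilde n_\delta$ still has three distinct neighbors; if $ab$ is a self-loop at $a$ with nonzero color it lifts (by \lemref{connected-lift} applied to that single edge, or directly by \lemref{colors}) to a $p$-cycle on the fiber over $a$, and subdividing one edge of that cycle and attaching $\tilde n_\delta$ to a vertex over $c$ is again an \textbf{(H2)} move. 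I would dispatch these by the same \lemref{colors} bookkeeping, remarking that the colored-move conventions (distinct colors on parallel new edges, nonzero color on new self-loops) are exactly what is needed to keep the lifted moves non-degenerate. Conversely, the reverse direction is immediate: if $\tilde H$ is obtained from $\tilde G$ by $p$ symmetric \textbf{(H2)} moves at the fiber over $n$, collapsing by $\Gamma$ recovers the single \textbf{(H2c)} move, with the replacement-edge color read off via \lemref{colors} as promised in the remark in \secref{moves}.
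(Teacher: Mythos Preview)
Your proposal is correct and follows essentially the same approach as the paper: both arguments observe that each vertex $\tilde n_\delta$ in the fiber over $n$ has degree three, and then use the color identity $\gamma_{an}-\gamma_{bn}=\gamma_{ab}$ to verify that its neighbors in the fibers over $a$ and $b$ are precisely the endpoints of one edge in the fiber over $ab$, so that each fiber witnesses an uncolored \textbf{(H2)} move. The paper packages the index computation into an appeal to \lemref{colors2} rather than working directly with \lemref{colors} as you do, and is terser about the degenerate cases; watch your sign conventions in the explicit indices (the orientation of $an$ versus $na$ flips the sign of $\gamma_{an}$ in the fiber subscript), though the identity check goes through either way.
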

\begin{proof}
The degree of the vertices in the fiber over the new vertex $n$ are all three.
\lemref{colors2} says that, for each $\gamma\in\Z/k\Z$ the
neighbors of $\tilde{n}_{\gamma}$ in $\tilde{H}$
are determined by the colors $\gamma_{an}$, $\gamma_{bn}$, and $\gamma_{cn}$ and that the neighbors
of $\tilde{n}_\gamma$ in the fibers over $a$ and $b$ are endpoints of an edge in the fiber over $ab$.
This determines the data specifying an \textbf{(H2)} move for each edge in the fiber over $ab$.
\end{proof}

\subsection{The forward moves}
Now we check that the forward moves preserve the cone-Laman property.  We will do this
using the interpretation of the colored moves in terms of the lift $\tilde{G}$
and uncolored moves and \propref{cone-laman-lift}.

\begin{lemma}\lemlab{cone-h1-forward}
The \textbf{(H1c)} move, applied to $(G,\bgamma)$, results in a cone-Laman graph.
\end{lemma}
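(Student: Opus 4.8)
The plan is to verify that an \textbf{(H1c)} move preserves the cone-Laman property by passing to the symmetric lift and invoking \propref{cone-laman-lift}. First I would record that, since $(G,\bgamma)$ is a cone-Laman graph on $n$ vertices with $2n-1$ edges, the result $(H,\bgamma)$ of an \textbf{(H1c)} move has $n+1$ vertices and $2n+1 = 2(n+1)-1$ edges, so it has the correct edge count; thus by \propref{cone-laman-lift} it suffices to show that the lift $\tilde{H}$ is Laman-sparse. By \propref{cone-laman-lift} applied to $G$, the lift $\tilde{G}$ is already Laman-sparse, and by \lemref{h1c-lift} the lift $\tilde{H}$ is obtained from $\tilde{G}$ by $p$ successive \textbf{(H1)} moves, each of which adds a degree-two vertex.

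The key point is then to control the neighbors of each newly added vertex so that we may apply the ``neighbors distinct'' clause of \propref{uncolored-moves}. Each vertex $\tilde{n}_\gamma$ in the fiber over $n$ is attached to one vertex in the fiber over $a$ and one vertex in the fiber over $b$, namely (by \lemref{colors}) $\tilde{a}_{\gamma - \gamma_{an}}$ and $\tilde{b}_{\gamma - \gamma_{bn}}$. If $a \neq b$ in $G$ these two lift-vertices lie in different fibers and hence are automatically distinct. If $a = b$, then the \textbf{(H1c)} convention forces $\gamma_{an} \neq \gamma_{bn}$, so $\gamma - \gamma_{an} \neq \gamma - \gamma_{bn}$ in $\Z/p\Z$ and the two neighbors are again distinct. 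Therefore each of the $p$ uncolored \textbf{(H1)} moves is performed on a new vertex whose two neighbors are distinct, and \propref{uncolored-moves} guarantees that Laman-sparsity is preserved at every stage.

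I expect the main (and only real) obstacle to be the bookkeeping of the last paragraph: making sure the ``$a=b$ implies different colors'' convention of \textbf{(H1c)} is exactly what is needed to meet the hypothesis of the Laman-preserving clause of \propref{uncolored-moves}, and checking that the $p$ moves really can be applied one after another — i.e. that Laman-sparsity of the intermediate graphs is not an issue, which follows because \propref{uncolored-moves} preserves it at each step and the distinctness of neighbors of $\tilde{n}_\gamma$ does not depend on which other fiber-vertices have already been added. Once $\tilde{H}$ is seen to be Laman-sparse, \propref{cone-laman-lift} immediately yields that $(H,\bgamma)$ is a cone-Laman graph, completing the proof.
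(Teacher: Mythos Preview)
Your proposal is correct and follows essentially the same approach as the paper's proof: both pass to the lift, use the color convention on \textbf{(H1c)} to ensure the neighbors of each $\tilde{n}_\gamma$ are distinct, and then invoke \propref{cone-laman-lift}, \lemref{h1c-lift}, and \propref{uncolored-moves} to conclude. Your version simply spells out the bookkeeping (the edge count, the explicit neighbor computation via \lemref{colors}, and the sequential applicability of the $p$ moves) that the paper leaves implicit.
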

\begin{proof}
Let $(H,\bgamma)$ be the graph obtained after the move.  The requirement that
if the neighbors of the new vertex $n$ are not distinct that the new
edges have different colors says that, in the lift $\tilde{H}$, the neighbors
of any vertex in the fiber over the new vertex $n$ are distinct.  \propref{cone-laman-lift},
\lemref{h1c-lift}, and \propref{uncolored-moves} imply that $\tilde{H}$ is Laman-sparse.
Since $H$ has $2n-1$ edges, the lemma follows.
\end{proof}
The proof of the next Lemma is nearly identical, with \lemref{h2c-lift} replacing \lemref{h1c-lift},
so we omit it.
\begin{lemma}\lemlab{cone-h2-forward}
The \textbf{(H2c)} move, applied to $(G,\bgamma)$, results in a cone-Laman graph.\eop
\end{lemma}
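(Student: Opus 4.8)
The statement to prove is \lemref{cone-h2-forward}: the \textbf{(H2c)} move applied to a cone-Laman graph $(G,\bgamma)$ results in a cone-Laman graph. The plan is to mirror the proof of \lemref{cone-h1-forward} exactly, substituting the \textbf{(H2)} machinery for the \textbf{(H1)} machinery. Let $(H,\bgamma)$ be the colored graph obtained after the move; it has $n+1$ vertices and $2(n+1)-1 = 2n+1$ edges, so by \propref{cone-laman-lift} it suffices to show that the symmetric lift $\tilde H$ is Laman-sparse.

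First I would invoke \lemref{h2c-lift}, which identifies $\tilde H$ as the graph obtained from $\tilde G$ by applying $p$ uncolored \textbf{(H2)} moves, one for each edge in the fiber over $ab$. Since $(G,\bgamma)$ is cone-Laman, \propref{cone-laman-lift} tells us $\tilde G$ is Laman-sparse. Next I would use the condition attached to the forward \textbf{(H2c)} move — namely, that whenever $a$, $b$, $c$ are not all distinct, the parallel edges added must have pairwise different colors — to conclude, via \lemref{colors} (or \lemref{colors2}), that in $\tilde H$ the three neighbors of every vertex $\tilde n_\gamma$ in the fiber over $n$ are distinct. This is precisely the hypothesis needed to apply the second bullet of \propref{uncolored-moves}: an \textbf{(H2)} move preserves Laman-sparsity when the neighbors of the new vertex are all distinct.

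The one point requiring slightly more care than in the \textbf{(H1c)} case is that we apply $p$ moves in sequence, not just one, and the intermediate graphs are not lifts of colored graphs, so I cannot re-invoke \propref{cone-laman-lift} at each stage. Instead I would simply apply \propref{uncolored-moves} $p$ times directly: each intermediate graph is a finite uncolored graph, Laman-sparsity is preserved by each \textbf{(H2)} move provided the new vertex's neighbors are distinct, and \lemref{colors2} guarantees the distinctness holds for every vertex in the fiber over $n$ simultaneously (the neighbor data for $\tilde n_\gamma$ is read off from the colors $\gamma_{an},\gamma_{bn},\gamma_{cn}$ and the endpoints of the corresponding edge in the fiber over $ab$, and the color constraints force these to be distinct). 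Hence $\tilde H$ is Laman-sparse, and so $(H,\bgamma)$ is cone-Laman. I expect no serious obstacle here — this lemma is genuinely the easy half; the hard work is deferred to \lemref{cone-h2-reverse}, the reverse direction, where one must show an entire edge orbit can be added back after deleting a vertex orbit.
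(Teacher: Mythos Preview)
Your proposal is correct and follows exactly the approach the paper intends: the paper states that the proof of \lemref{cone-h2-forward} is ``nearly identical'' to that of \lemref{cone-h1-forward}, with \lemref{h2c-lift} replacing \lemref{h1c-lift}, and that is precisely what you do. Your remark about the $p$ intermediate graphs not being lifts is a sensible elaboration, but since \propref{uncolored-moves} is a statement about arbitrary finite graphs it poses no obstacle, as you note.
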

The lollipop move \textbf{(H1c$'$)} requires slightly more careful consideration
of the lift $\tilde{H}$.  There is no version of \lemref{h2c-lift}
for this move, because vertices in the fiber over the new vertex are neighbors
with each other.
\begin{lemma}\lemlab{cone-h1cp-forward}
The \textbf{(H1c$'$)} move, applied to $(G,\bgamma)$, results in a cone-Laman graph.
\end{lemma}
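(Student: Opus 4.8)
The plan is to mimic the structure of the proof of \lemref{cone-h1-forward}, but to handle the self-loop on the new vertex $n$ carefully, since \lemref{h1c-lift} does not apply.  Let $(H,\bgamma)$ be the colored graph obtained from $(G,\bgamma)$ by the \textbf{(H1c$'$)} move: $n$ is joined to the existing vertex $a$ by a single edge $an$ with arbitrary color, and carries a self-loop with non-zero color $\gamma_{nn}\neq 0$.  The first step is to describe the lift $\tilde H$ explicitly.  Over $a$ there are $p$ vertices $\tilde a_\gamma$; over $n$ there are $p$ vertices $\tilde n_\gamma$.  By \lemref{colors}, each $\tilde n_\gamma$ is joined to exactly one vertex in the fiber over $a$, and the self-loop with color $\gamma_{nn}\neq 0$ lifts to edges $\tilde n_\gamma \tilde n_{\gamma+\gamma_{nn}}$; since $\gamma_{nn}$ generates $\Z/p\Z$ (as $p$ is prime and $\gamma_{nn}\neq 0$), these $p$ edges form a single $p$-cycle on the fiber over $n$.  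So $\tilde H$ is obtained from $\tilde G$ by adding $p$ new vertices, joining each by a pendant edge to a distinct vertex of the fiber over $a$, and then adding a $p$-cycle through all $p$ new vertices.

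The second step is to check that this operation preserves Laman-sparsity.  The cleanest way is to realize $\tilde H$ from $\tilde G$ by a sequence of uncolored moves covered by \propref{uncolored-moves}: first add the $p$ new vertices by $p$ uncolored \textbf{(H1)} moves, where the new vertex $\tilde n_\gamma$ is attached to $\tilde a_\gamma$ and to the next new vertex $\tilde n_{\gamma+\gamma_{nn}}$ in a consistent cyclic order; this builds all pendant edges and all but one edge of the $p$-cycle.  The final $p$-cycle edge closing the ring is the one new edge that cannot be added by an \textbf{(H1)} move, so instead I add it directly and argue it keeps Laman-sparsity: adding it creates at most one new cycle, whose edge set is exactly the lifted self-loop orbit, and this cycle together with the pendant edges spans $2p$ vertices with only $2p$ edges in the lifted subgraph over $\{a$-fiber$\}\cup\{n$-fiber$\}$ relative to $\tilde G$ — more precisely, the induced count only involves the $p$ new vertices and their $2p$ incident new edges, so any violating subgraph of $\tilde H$ would have to be contained in $\tilde G$, contradicting that $\tilde G$ is Laman-sparse.  (Alternatively, one observes that a spanning Laman-basis of $\tilde H$ is obtained from a spanning Laman-basis of $\tilde G$ by adding the $p$ pendant edges and $p-1$ of the ring edges, which is Laman-sparse by the degree-of-freedom count $2(pn+p)-3$.)

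The third step is the bookkeeping: $H$ has $n+1$ vertices and $(2n-1)+2 = 2(n+1)-1$ edges, so once we know $\tilde H$ is Laman-sparse, \propref{cone-laman-lift} immediately gives that $(H,\bgamma)$ is a cone-Laman graph.  It is worth noting explicitly here that the hypothesis $\gamma_{nn}\neq 0$ is exactly what guarantees the self-loop lifts to a connected $p$-cycle rather than $p$ disjoint self-loops (which would not be Laman-sparse in the lift); this is where the convention in the definition of \textbf{(H1c$'$)} is used.

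I expect the main obstacle to be the one non-\textbf{(H1)} edge — the edge that closes the lifted $p$-cycle over $n$.  Unlike the situation in \lemref{h1c-lift}, the lift of \textbf{(H1c$'$)} is genuinely not a group of uncolored \textbf{(H1)} moves, so \propref{uncolored-moves} cannot be invoked as a black box; one has to argue by hand that closing the ring preserves Laman-sparsity, using that the only edges involved are incident to the $p$ fresh degree-two-or-three vertices and hence any potential sparsity-violating subgraph reduces to one already inside $\tilde G$.  Everything else is routine once the lift is described correctly.
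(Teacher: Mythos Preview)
Your overall strategy is sound and your fallback direct-counting argument is exactly what the paper does, but the primary route you describe---building $\tilde H$ from $\tilde G$ by $p$ uncolored \textbf{(H1)} moves---does not work as written.  The issue is a bootstrapping problem: in your scheme each new vertex $\tilde n_\gamma$ is supposed to be attached by \textbf{(H1)} to $\tilde a_{\gamma+\gamma_{an}}$ and to $\tilde n_{\gamma+\gamma_{nn}}$, but when you add the \emph{first} new vertex its intended cycle-neighbor does not yet exist.  There is no ordering of the $p$ vertices that avoids this, since the cycle has no starting point.  You can repair this (e.g., one degree-$1$ extension, then $p-1$ \textbf{(H1)} moves, then one extra edge), but then you are left with the same task you already flagged: arguing that closing the ring keeps Laman-sparsity.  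And the only way to do that is the subgraph count you sketch in your ``more precisely'' clause.

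The paper simply does that count directly and skips the detour through uncolored moves entirely.  Its argument is one line: if $V'\subset V(\tilde H)$ contains $n'$ vertices of $\tilde G$ and $t$ vertices in the fiber over $n$, then the $t$ new vertices span at most $t$ edges among themselves (they lie on a $p$-cycle) and are incident to at most $t$ pendant edges into $V'$, so the induced edge count is at most $(2n'-3)+2t=2|V'|-3$.  Your ``alternatively'' and ``more precisely'' remarks are this argument; make it the main proof rather than a parenthetical, and drop the \textbf{(H1)}-move construction, which adds nothing once the direct count is in hand.
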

\begin{proof}
We consider the lift $\tilde{H}$.  Any subset of $t$ vertices in the fiber over the new vertex $n$
spans at most $t$ edges and connects to the rest of $\tilde{H}$ with exactly $t$ edges.  Thus,
for any $V'\subset V(\tilde{H})$ on $n'$ vertices not in the fiber over $n$ and $t$ in the
fiber over $n$, the number of edges induced by $V'$ is bounded by $2n' - 3 + 2t = 2|V'| - 3$,
since $\tilde{G}$ is Laman-sparse by \propref{cone-laman-lift}.
\end{proof}

\paragraph{Remark}  The distinction
between \textbf{(H2c)} and \textbf{(H1c$'$)} is implicit in \cite{S10}.

\paragraph{Remark} With a slightly more delicate argument, using some structural
results from \cite{BHMT11,MT12}, we can show that the lemmas above hold even when
$p$ isn't prime by working with the colored graph directly.
Since we don't need the extra generality, we omit the proof.

\subsection{The reverse moves}
To complete the proof, we check that the reverse moves also preserve the
cone-Laman property.  In light of \propref{cone-laman-lift} and \lemref{h1c-lift},
the following are straightforward.
\begin{lemma}\lemlab{cone-h1-reverse}
The reverse \textbf{(H1c)} move, applied to $(G,\bgamma)$, results in a cone-Laman graph.\eop
\end{lemma}
\begin{lemma}\lemlab{cone-h1p-reverse}
The reverse \textbf{(H1c$'$)} move, applied to $(G,\bgamma)$, results in a cone-Laman graph.\eop
\end{lemma}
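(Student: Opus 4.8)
The plan is to mimic the argument for the reverse \textbf{(H1c)} move: pass to the symmetric lift and invoke \propref{cone-laman-lift}, using only that Laman-sparsity is hereditary. Suppose the reverse \textbf{(H1c$'$)} move is applicable to $(G,\bgamma)$; say $G$ has $N$ vertices, hence $2N-1$ edges, and let $n$ be a vertex incident on exactly one self-loop (whose color is non-zero) and one other edge $an$. Let $(G',\bgamma)$ be obtained from $(G,\bgamma)$ by deleting $n$ together with those two edges; then $G'$ has $N-1$ vertices and $2N-3 = 2(N-1)-1$ edges, the edge count of a cone-Laman graph, so \propref{cone-laman-lift} is available for $(G',\bgamma)$.

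Next I would identify the lift of $(G',\bgamma)$ inside $\tilde{G}$: by \lemref{colors}, the edges of $\tilde{G}$ meeting the fiber over $n$ are precisely the lifts of the self-loop on $n$ and of $an$, so deleting that whole fiber from $\tilde{G}$ yields exactly the lift of $G' = G - n$. Since $\tilde{G}$ is Laman-sparse by \propref{cone-laman-lift} and Laman-sparsity is hereditary, the lift of $(G',\bgamma)$ is Laman-sparse. By \propref{cone-laman-lift} again, applied to $(G',\bgamma)$, which has $N-1$ vertices and $2(N-1)-1$ edges, we conclude that $(G',\bgamma)$ is a cone-Laman graph.

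I do not expect a genuine obstacle here: the only substantive point is the one-line observation that deleting a vertex orbit in $\tilde{G}$ produces the lift of the corresponding vertex-deleted colored graph, which is immediate from \lemref{colors}, together with the heredity of Laman-sparsity. If one prefers to stay with the colored graph, the same conclusion follows directly: \eqref{cone-laman-sparse} for each subgraph of $G'$ is inherited from $G$, and it holds with equality on $G'$ itself because $G'$ is connected --- the vertex $n$ meets the rest of $G$ only through $an$, so no path through $n$ is essential --- and has $\rho$-rank $1$, the rank-$0$ alternative being impossible since it would make \eqref{cone-laman-sparse} on $G'$ read $2(N-1)-1 \le 2(N-1)-3$.
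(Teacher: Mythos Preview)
Your proposal is correct and matches the paper's approach: the paper gives no explicit proof (the lemma is marked \eop), merely noting that it is ``straightforward'' in light of \propref{cone-laman-lift}, which is exactly the heredity-of-Laman-sparsity-in-the-lift argument you spell out. Your alternative direct argument on the colored graph is also sound and a nice addendum, though the paper does not pursue it.
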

The hard step is the reverse \textbf{(H2c)} move, which only says that there is \emph{some}
edge we can put back with locally determined colors and orientation.
\begin{lemma}\lemlab{cone-h2-reverse}
Given any degree-three vertex $i$ in $(G,\bgamma)$ not incident on any self-loop, there is a
reverse \textbf{(H2c)} move, applied to $i$, that results in a cone-Laman graph.
\end{lemma}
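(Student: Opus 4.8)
The plan is to work in the symmetric lift $\tilde{G}$ and reduce the claim to a statement about Laman-sparse graphs. By \propref{cone-laman-lift}, $\tilde{G}$ is Laman-sparse with $pn$ vertices and $2pn - p$ edges, so it has exactly $p-1$ Laman degrees of freedom; equivalently, adding any single edge in the orbit of a new edge (the edge we want to put back) to a suitable sparse subgraph creates a circuit. Fix the degree-three vertex $i$ with neighbors $a,b,c$ (not necessarily distinct, none equal to $i$ since there is no self-loop). In the lift, every vertex $\tilde{i}_\gamma$ in the fiber over $i$ has degree three, and by \lemref{colors2} its three neighbors lie in the fibers over $a$, $b$, $c$ at positions determined by the colors. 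A reverse \textbf{(H2c)} move on $i$ corresponds, via \lemref{h2c-lift} read backwards, to deleting the entire orbit $\{\tilde{i}_\gamma\}$ and adding back one full edge orbit — say the orbit of an edge joining the fibers over $a$ and $b$ with the color read off by \lemref{colors}. So I must show: there is a choice among the three candidate edge-orbits (the $ab$-type, $ac$-type, or $bc$-type orbit) such that deleting the $i$-orbit and adding that orbit back keeps $\tilde{G}$ Laman-sparse.

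The key steps, in order. First, delete the orbit of $\tilde{i}$ from $\tilde{G}$ to get $\tilde{G}_0$ on $p(n-1)$ vertices and $2p(n-1) - 3p + p = 2p(n-1) - 2p$... more carefully, $\tilde{G}$ has $2pn-p$ edges and we remove $3p$ edges, so $\tilde{G}_0$ has $2pn - p - 3p = 2p(n-1) - 2p$ edges and $p(n-1)$ vertices, hence $2p$ degrees of freedom as a Laman-sparse graph; we then want to add $p$ edges (one orbit) and still be Laman-sparse, leaving $p$ degrees of freedom. Second, observe that for a reverse Henneberg-2 move on a single vertex in an uncolored Laman graph, a classical fact (the proof of the Henneberg-2 step in \cite{LS08,FS07}, which I would cite) guarantees that among the three pairs of neighbors, at least one pair $\{x,y\}$ has the property that $xy$ can be added to $\tilde{G}_0$ without creating a Laman-circuit — UNLESS all three candidate edges are simultaneously blocked, which for a single vertex cannot happen. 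The subtlety here is that I am not free to add a single edge $\tilde{x}_\delta\tilde{y}_{\delta'}$; I must add its whole orbit at once, and the orbit may contain up to $p$ edges that individually are fine but jointly overload some subgraph. Third — and this is where \propref{cyclic-eliminate} enters — suppose some candidate orbit, say the $ab$-type orbit $\widetilde{ab}$, fails: then $\tilde{G}_0 \cup \widetilde{ab}$ spans a Laman-circuit $H$. Because the whole configuration is $\varphi$-invariant, $\varphi(\gamma')\cdot H$ is also such a circuit for every $\gamma'$, and for a suitable $\gamma'$ the circuits $H$ and $\varphi(\gamma')\cdot H$ must share an edge of $\widetilde{ab}$ (a counting/pigeonhole argument on how the $p$ new edges distribute among the $\varphi$-images of $H$). \propref{cyclic-eliminate} then produces a Laman-circuit in $\tilde{G}_0\cup\widetilde{ab}$ passing through exactly one edge of the orbit $\widetilde{ab}$; deleting that one edge shows that $\tilde{G}_0$ plus all but one edge of $\widetilde{ab}$ is still Laman-sparse, so the "overload" caused by adding $\widetilde{ab}$ is exactly one edge's worth — i.e. $\widetilde{ab}$ adds only $p-1$ independent edges instead of $p$.

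Fourth, run a counting argument over the three candidates. Each candidate orbit $\widetilde{ab}$, $\widetilde{ac}$, $\widetilde{bc}$, when added to $\tilde{G}_0$, increases the Laman rank by either $p$ (good: the move works) or at most $p-1$ (bad, by Step 3). If all three were bad, then I would get a contradiction with the uncolored Henneberg-2 fact applied downstairs, or more directly: the three orbits together with the edges induced by the $i$-fiber would over-determine things. Concretely, in $\tilde{G}$ itself each $\tilde{i}_\gamma$ together with its three incident edges is a Henneberg-2 configuration, and the standard argument says that for each $\gamma$ at least one of the three pairs of neighbors of $\tilde{i}_\gamma$ can receive the short-circuit edge; by the $\varphi$-invariance the "good pair" can be chosen uniformly in $\gamma$, which is exactly a good orbit. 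I would phrase this last step as: pick any $\tilde{i}_\gamma$; the subgraph $\tilde{G} - \tilde{i}_\gamma$ (remove one vertex, not the orbit) is obtained from $\tilde{G}$ by a reverse uncolored \textbf{(H2)} that is valid by \propref{uncolored-moves} for some pair; then bootstrap from a valid single-vertex reverse move to a valid orbit reverse move using Step 3 to absorb the at-most-one-edge deficiency each time an orbit is re-added. The main obstacle, and the technical heart of the lemma, is precisely this bootstrapping: controlling the interaction between the $p$ edges of an added orbit, which is what \propref{cyclic-eliminate} is designed to handle, and making sure the "deficiency $\le 1$ per orbit" bound is tight enough that at least one of the three orbit-choices comes out with full rank $p$. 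I expect the cleanest writeup picks one $\tilde{i}_\gamma$, applies the uncolored reverse \textbf{(H2)} there to single out a neighbor pair, promotes that to the corresponding colored edge via \lemref{colors2}, and then verifies Laman-sparsity of the resulting colored graph's lift directly by the circuit-elimination of \propref{cyclic-eliminate}, concluding with \propref{cone-laman-lift}.
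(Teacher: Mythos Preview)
Your final paragraph is exactly the paper's argument, and it is correct: remove the fiber over $i$ to obtain $\tilde{H}$; apply the uncolored reverse \textbf{(H2)} at the single vertex $\tilde{i}_0$ in $\tilde{G}$ to find one edge $e$ among its three neighbors with $(\tilde{G}-\tilde{i}_0)+e$ Laman-sparse, hence $\tilde{H}+e$ Laman-sparse since $\tilde{H}\subset \tilde{G}-\tilde{i}_0$; by symmetry $\tilde{H}$ plus \emph{any single} edge of the orbit of $e$ is Laman-sparse, so any circuit in $\tilde{H}'=\tilde{H}\cup(\text{orbit of }e)$ meets the orbit in at least two edges, giving two $\varphi$-translates of that circuit sharing an orbit edge; \propref{cyclic-eliminate} then produces a circuit through exactly one orbit edge, contradicting the previous sentence; finish with \propref{cone-laman-lift}.

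The detour through your Steps 3--4 is unnecessary and contains an actual error. Your Step~3 concludes that if a candidate orbit $\widetilde{ab}$ fails, then ``$\widetilde{ab}$ adds only $p-1$ independent edges instead of $p$.'' This is wrong: once \propref{cyclic-eliminate} produces a Laman-circuit through exactly one orbit edge $e'$, that circuit lies entirely in $\tilde{G}_0 + e'$, so $e'$ is in the Laman-span of $\tilde{G}_0$; since $\tilde{G}_0$ is $\varphi$-invariant, \emph{every} orbit edge is then in the span of $\tilde{G}_0$, and the orbit contributes rank $0$, not $p-1$. The ``deficiency $\le 1$ per orbit'' bookkeeping you set up for Step~4 therefore collapses, and no three-orbit counting argument is needed. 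The clean route you describe at the end bypasses all of this: having first fixed $e$ via the single-vertex reverse \textbf{(H2)}, a circuit through exactly one orbit edge is already a contradiction, so the pigeonhole you sketch is also superfluous.
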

\begin{proof}
Let $(\tilde{G},\varphi)$ be the symmetric lift, and let $i$ be a degree three vertex in $G$ with neighbors
$a$, $b$, and $c$. Since we are doing a reverse \textbf{(H2c)} move (and not a lollipop),
$a$, $b$, and $c$ are all different from $i$ (though not necessarily each other).

Let
$\tilde{a}_\alpha$, $\tilde{b}_\beta$, $\tilde{c}_\gamma$ be the neighbors of $\tilde{i}_0$.
\propref{cone-laman-lift} tells us that these vertices are all different from each other, even if
they are in a common orbit.  \lemref{colors2} and \lemref{h2c-lift} tell us that it is sufficient to
show that if we can remove the fiber over $i$ from $\tilde{G}$ and add back the orbit of an edge
between the neighbors of $\tilde{i}_0$, the lemma will follow.  Let $\tilde{H}$ be the
symmetric graph obtained by removing the fiber over $\tilde{i}_0$ from $\tilde{G}$.

\propref{uncolored-moves} implies that there is an edge between some pair of
$\tilde{a}_\alpha$, $\tilde{b}_\beta$, $\tilde{c}_\gamma$ that, when added to $\tilde{H}$,
results in a Laman-sparse graph.  Without loss of generality, this is
$\tilde{a}_\alpha\tilde{b}_\beta$.  The crux of the proof is that we can put back the entire orbit of
$\tilde{a}_\alpha\tilde{b}_\beta$ maintaining Laman-sparsity.  Let $\tilde{H}'$ be the graph $\tilde{H}$
with the orbit of $\tilde{a}_\alpha\tilde{b}_\beta$ added to it.

Suppose, for a contradiction, that $\tilde{H}'$ is not Laman-sparse.
Since $\tilde{H}+\tilde{a}_\alpha\tilde{b}_\beta$
is Laman-sparse, symmetry implies that any Laman-circuit in $\tilde{H}'$ goes through two of the edges
in the orbit of $\tilde{a}_\alpha\tilde{b}_\beta$.
This is the situation from \propref{cyclic-eliminate}, leading to a contradiction: the new
edges were selected so that there are no Laman-circuits through exactly one of them, but such a
circuit is forced by \propref{cyclic-eliminate}.
\end{proof}

\section{\theoref{main} for cylinder-Laman graphs}\seclab{cylinder-laman-proof}
We now turn to cylinder-Laman graphs.  There are two differences, between this case and the cone-Laman
one: we want the colors to come from $\Z$ and we have to check that the two allowed moves
can't generate a graph that is cone-Laman, but not cylinder-Laman.  (It is clear that the lollipop move
\textbf{(H1c$'$)}  does this.)

\subsection{From $\Z/p\Z$ colors to $\Z$ colors}
Instead of trying to replicate \propref{cone-laman-lift} on an infinite $(\tilde{G},\varphi)$,
we instead use the following reduction.
\begin{lemma}
Let $(G,\bgamma)$ be a $\Z$-colored graph.  Then $(G,\bgamma)$ is cone-Laman with $\Z$
colors if and only if it is cone-Laman for $\Z/p\Z$ colors for some sufficiently
large prime $p$.
\end{lemma}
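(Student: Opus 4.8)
The plan is to exploit that, for a fixed finite colored graph, the cone-Laman count \eqref{cone-laman-sparse} depends only on the $\rho$-ranks of the (finitely many) connected subgraphs, and that these ranks do not change once the colors are reduced modulo a large enough prime. The content is therefore a stabilization statement plus bookkeeping with \eqref{cone-laman-sparse}; the only place to be careful is pinning down what ``some sufficiently large prime'' should mean and producing the threshold.

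First I would record the structural simplification available for both $\Gamma = \Z$ and $\Gamma = \Z/p\Z$ ($p$ prime): the image of $\rho$ on any subgraph is a subgroup of a group of rank one (respectively of a cyclic group of prime order), so the $\rho$-rank of every subgraph is $0$ or $1$. Hence $c'_2 = 0$ always and \eqref{cone-laman-sparse} becomes $m' \le 2n' - 3c'_0 - c'_1$; equivalently, writing $c'$ for the number of connected components, the right-hand side is $2n' - c' - 2c'_0$. Moreover a connected subgraph $G'$ has $\rho$-rank $1$ exactly when some cycle of $G'$ has nonzero $\rho$-image, and, since $\rho$ is linear on $\HH_1$, it suffices to test this on the fundamental cycles of any chosen spanning tree of $G'$.

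Next, given the $\Z$-colored graph $(G,\bgamma)$, write $\bgamma_p$ for the coloring obtained by reducing each color modulo $p$. Since $\rho$ is an oriented sum of colors and $\Z \to \Z/p\Z$ is a ring homomorphism, $\rho_{\Z/p\Z}(C) \equiv \rho_{\Z}(C) \pmod p$ for every cycle $C$. Because $G$ is finite it has only finitely many connected subgraphs; fix a spanning tree of each, and let $N$ be one more than the maximum of $|\rho_{\Z}(C)|$ over all the resulting fundamental cycles $C$. I claim that for every prime $p \ge N$ and every connected subgraph $G'$, the $\Z/p\Z$-$\rho$-rank of $G'$ equals its $\Z$-$\rho$-rank. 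Indeed, if the $\Z$-rank is $0$ then every fundamental cycle of $G'$ has zero $\rho_\Z$-image, hence zero $\rho_{\Z/p\Z}$-image, so the $\Z/p\Z$-rank is $0$; if the $\Z$-rank is $1$ then some fundamental cycle $C$ has $0 < |\rho_\Z(C)| < N \le p$, so $\rho_{\Z/p\Z}(C) \ne 0$, forcing the $\Z/p\Z$-rank to be at least $1$, hence (by the first paragraph) exactly $1$.

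Finally I would conclude. For any prime $p \ge N$, the quantities $c'_0, c'_1, c'_2$ attached to every subgraph coincide for $(G,\bgamma)$ and for $(G,\bgamma_p)$, so for each subgraph the inequality \eqref{cone-laman-sparse} holds over $\Z$ if and only if it holds over $\Z/p\Z$; quantifying over all subgraphs gives cone-Laman-sparsity over $\Z$ iff over $\Z/p\Z$, and applying the same observation to the subgraph $G$ itself transfers the equality condition. Thus $(G,\bgamma)$ is a cone-Laman graph over $\Z$ if and only if $(G,\bgamma_p)$ is a cone-Laman graph over $\Z/p\Z$ for every prime $p \ge N$, which is the intended reading of the statement. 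This is exactly the reduction needed so that \propref{cone-laman-lift} (and hence the Henneberg argument of \secref{cone-laman-proof}) can be applied in the cylinder-Laman case without working directly with an infinite lift. I expect the ``hard part'' to be purely expository: stating the threshold $N$ cleanly and checking the claimed finiteness, since the rank comparison itself is immediate once $\rho$ is known to commute with reduction mod $p$.
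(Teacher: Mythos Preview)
Your argument is correct and follows the same underlying idea as the paper: the cone-Laman condition for a fixed finite colored graph depends only on whether finitely many integers (the $\rho$-images of cycles) are zero, and these vanish over $\Z$ iff they vanish over $\Z/p\Z$ once $p$ exceeds their absolute values. The paper's proof is a single sentence (``pick $p$ large enough so that the magnitude of the colors arising in reverse steps is strictly less than $p$''), so your write-up is considerably more explicit about the threshold and the bookkeeping; the only cosmetic point is that enumerating spanning trees of all connected subgraphs is overkill, since any $\rho$-image of a cycle is bounded by the sum of the absolute values of the edge colors of $G$.
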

\begin{proof}
Pick the prime $p$ large enough so that the magnitude of the colors arising in reverse steps is strictly
less than $p$.
\end{proof}

\subsection{From cone-Laman to cylinder-Laman}
Cylinder-Laman graphs are characterized by \cite[Theorem 8]{MT12} as cone-Laman
graphs that have a $(2,2)$-spanning underlying graph. This means the only thing to check is:
\begin{lemma}
The \textbf{(H2)} move preserves the property of being $(2,2)$-spanning in the forward and
reverse directions.
\end{lemma}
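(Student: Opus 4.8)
The plan is to verify that the uncolored \textbf{(H2)} move neither destroys nor creates $(2,2)$-spanning-ness. Recall that a graph is $(2,2)$-spanning if it contains a spanning $(2,2)$-graph, equivalently if its $(2,2)$-sparsity matroid has full rank $2n-2$. First I would handle the forward direction. Let $\tilde{G}$ be $(2,2)$-spanning and let $B$ be a spanning $(2,2)$-block. Perform an \textbf{(H2)} move: delete an edge $uv$, add a vertex $w$ joined to $u$, $v$, and a third vertex $z$. If $uv \in B$, I replace it in $B$ by the two edges $uw$ and $vw$; the resulting subgraph on $n+1$ vertices has $2n-2+1 = 2(n+1)-3$ edges, which is $(2,2)$-sparse (in fact $(2,3)$-sparse), and adding $wz$ brings it to $2(n+1)-2$ edges — I must check this last edge does not create a $(2,2)$-circuit, but that follows because $w$ has degree $3$ so any tight $(2,2)$-subset containing $w$ would have to contain $wz$ together with $uw,vw$, and removing $w$ from such a set leaves a $(2,2)$-tight set in the old block plus the edge $uv$, which is spanned. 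If $uv \notin B$, then $B$ survives the deletion, and I extend it by $uw, vw, wz$, picking two of the three to keep it $(2,2)$-sparse and spanning on $n+1$ vertices. Either way $\tilde{H}$ contains a spanning $(2,2)$-block.

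Next the reverse direction. Suppose $\tilde{H}$, obtained by a reverse \textbf{(H2)} move at a degree-three vertex $w$ with neighbors $u,v,z$, should be shown $(2,2)$-spanning given that $\tilde{G}$ (before the reverse move, i.e. the graph with $w$) is $(2,2)$-spanning. Here I would use the standard exchange argument: take a spanning $(2,2)$-block $B$ of $\tilde{G}$. At most two of the three edges at $w$ lie in $B$ (since $B$ is $(2,2)$-sparse and $\{w\}$ alone could carry at most... actually a single vertex carries $\le 2\cdot1-2=0$ self-loops, and with its neighbors the degree-$2$ bound comes from sparsity of the pair-sets); in any case if all three were in $B$ then the set $\{w,u,v,z\}$ or a superset would violate sparsity unless it is tight, and then $w$ plus two of its edges can be rerouted. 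The key fact is: if $w$ has degree exactly $2$ in $B$, say via $uw, vw$, then $B - w$ is a $(2,2)$-graph on $n$ vertices with $2n-2-2 = 2n-4 = 2(n-1)-2$... that is not spanning. So instead I argue that $w$ has degree $3$ in $B$ is impossible and degree $2$ in $B$ forces us to use the classic "$(2,2)$-graphs are generated by \textbf{(H2)} from a doubled edge" result from \propref{uncolored-moves}: perform the reverse \textbf{(H2)} on $B$ itself at $w$, which is legal because $w$ has degree $\le 3$ and $B$ is a $(2,2)$-graph, obtaining a $(2,2)$-graph on $n-1$ vertices that sits inside some choice of reverse move on $\tilde{G}$; since the reverse \textbf{(H2)} move has a prescribed target in our setting, I check the edge put back is among the choices making $B$ reverse-move cleanly, using that \propref{uncolored-moves} guarantees \emph{some} valid reverse move on $B$ and that the $(2,2)$ matroid is independent of which of the $\le 3$ admissible edges is reinserted.

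The main obstacle I anticipate is the reverse direction, specifically the coordination between "the particular edge the reverse \textbf{(H2c)} move must put back" (dictated by \lemref{colors} and the colored structure) and "the edge a reverse \textbf{(H2)} move on the spanning $(2,2)$-block $B$ would naturally put back." In the cone-Laman argument this tension was resolved by \propref{cyclic-eliminate}; here, for the $(2,2)$ count, I expect it to be easier because $(2,2)$-sparsity has one more unit of slack than $(2,3)$-sparsity, so after deleting $w$ and its three incident edges the graph $B - w$ has $2n-5$ edges on $n-1$ vertices, i.e. $2(n-1)-3$ edges, which is $(2,2)$-sparse with one edge of slack; I can then add \emph{any} of the three candidate edges $uv$, $uz$, $vz$ and remain $(2,2)$-sparse, and a dimension count shows the result is $(2,2)$-spanning precisely when $B-w$ had full rank $2(n-1)-3$ — which it does, being a subgraph of the tight block minus a degree-$3$ vertex — plus one. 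Thus the correct statement to isolate and prove is: if $H$ contains a spanning $(2,2)$-independent set of size $2|V(H)|-3$ that becomes $2|V(H)|-2$ after one more edge, it is $(2,2)$-spanning; and a reverse \textbf{(H2)} move always lands in this situation regardless of which admissible edge is chosen. Once that lemma is in hand, both directions close without a case analysis.
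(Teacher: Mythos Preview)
Your reverse-direction argument contains a genuine gap. You correctly identify the coordination problem---the edge dictated by the cone-Laman reverse move must also be compatible with $(2,2)$-spanning---and you attempt to dissolve it by arguing that \emph{any} of the three candidate edges can be added to $B-w$ while staying $(2,2)$-sparse (hence $(2,2)$-spanning). That claim is false. Take $B$ to be the $(2,2)$-graph on $\{w,u,v,z\}$ with edges $wu,\,wv,\,wz,\,uv,\,uv,\,uz$. Then $w$ has degree $3$ in $B$, and $B-w=\{uv,uv,uz\}$ already carries two parallel $uv$ edges; adding a third violates $(2,2)$-sparsity on $\{u,v\}$. The stronger target statement---that \emph{every} reverse \textbf{(H2)} move preserves $(2,2)$-spanning---is likewise false: on $\{w,u,v,z\}$ take $H$ with the seven edges $wu,\,wv,\,wz,\,uz,\,uv,\,uv,\,uv$ (so $|E(H)|=2\cdot 4-1$). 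One checks $H$ is $(2,2)$-spanning, but $(H-w)+uv$ leaves $z$ with degree $1$ and hence cannot contain two edge-disjoint spanning trees. So the ``extra unit of slack'' heuristic does not buy what you need. (There is also a small slip earlier: when $w$ has degree $2$ in $B$, the graph $B-w$ has $2(n-1)-2$ edges and \emph{is} a spanning $(2,2)$-graph on $n-1$ vertices, not ``not spanning'' as you wrote; that case is actually the easy one.)

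The paper's argument proceeds differently: it invokes Tutte--Nash-Williams to write a spanning $(2,2)$-block as the edge-disjoint union of two spanning trees, and observes that a degree-$3$ vertex $w$ must be a leaf in one of them. For the reverse direction this immediately singles out a \emph{particular} reverse \textbf{(H2)} move---reconnect the two components left in the tree where $w$ had degree $2$---which visibly preserves $(2,2)$-spanning. This establishes that \emph{some} reverse move works, not all; your attempt to upgrade to ``all'' is where the proposal breaks.
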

\begin{proof}
The Tutte-Nash-Williams Theorem \cite{T61,N61} says that a $(2,2)$-spanning graph decomposes into two connected
subgraphs.  Since a degree three vertex will always be a leaf in exactly one of these subgraphs, for \emph{any}
such decomposition, the lemma is clear.
\end{proof}

\section{\theoref{main} for Ross-graphs}\seclab{ross-proof}
Finally, we adapt our technique to Ross graphs, recovering a result of \cite{R11}.

\subsection{The base case}
Checking that the claimed bases are Ross graphs is straightforward.\eop

\subsection{Inductive step}
Since the underlying graphs of Ross graphs are $(2,2)$-graphs \cite[Lemma 4]{BHMT11},
by the Henneberg construction for $(2,2)$-graphs \cite{LS08,FS07}, we just need to
check the \textbf{(H1c)} and \textbf{(H2c)} moves in each direction.  The
proof for \textbf{(H1c)} is identical to that in \secref{cone-laman-proof},
as is the forward direction for \textbf{(H2c)}.

For the reverse direction, we can't directly apply \propref{cone-laman-lift},
unless the colored graph $(G,\bgamma)$ has $\rho$-rank one.  However,
if it does not, we can make the following modification.
\begin{prop}
Let $p$ and $q$ be distinct odd primes, and let $(G,\bgamma)$ be a $\Z/p\Z\times \Z/q\Z$-colored graph
with $2n-1$ edges.  Then $(G,\bgamma)$ is cone-Laman if and only if its lift is Laman-sparse.
\end{prop}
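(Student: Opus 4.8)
The plan is to reduce this proposition to \propref{cone-laman-lift} by exploiting the structure of $\Z/p\Z \times \Z/q\Z$ as a product of cyclic groups, and, more precisely, by reducing the ``lift is Laman-sparse'' condition to a condition about the $\Z/p\Z$-lift and the $\Z/q\Z$-lift. First I would observe that, since the colors come from $\Z/p\Z\times\Z/q\Z$, the $\rho$-rank of any subgraph is $0$, $1$, or $2$, and the projections $\pi_p: \Z/p\Z\times\Z/q\Z \to \Z/p\Z$ and $\pi_q: \Z/p\Z\times\Z/q\Z \to \Z/q\Z$ induce two colored graphs $(G,\pi_p\bgamma)$ and $(G,\pi_q\bgamma)$ on the same underlying directed graph. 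A subgraph $G'$ has $\rho$-rank zero in $(G,\bgamma)$ if and only if it has $\rho$-rank zero in both projections, and since $p,q$ are distinct primes, a subgraph with $\rho$-rank one or two for $\bgamma$ has $\rho$-rank one in at least one of the two projections. Since cone-Laman-sparsity only distinguishes the cases $c'_0$ versus $c'_1 + c'_2$ in \eqref{cone-laman-sparse}, and a connected subgraph of $\rho$-rank zero contributes $-3$ while one of $\rho$-rank one or two contributes $-1$, this means that $(G,\bgamma)$ is cone-Laman-sparse if and only if both $(G,\pi_p\bgamma)$ and $(G,\pi_q\bgamma)$ are cone-Laman-sparse (the binding constraint in each case is whether each connected component is ``trivial'' or not, and a component is non-trivial for $\bgamma$ iff it is non-trivial for at least one projection).

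Next I would relate the $\Z/p\Z\times\Z/q\Z$-lift $\tilde{G}$ to the $\Z/p\Z$-lift $\tilde{G}_p$ and $\Z/q\Z$-lift $\tilde{G}_q$. By the Chinese Remainder Theorem, $\tilde{G}$ is a $\Z/p\Z\times\Z/q\Z$-cover, and it factors: $\tilde{G}$ is the ``fiber product'' of $\tilde{G}_p$ and $\tilde{G}_q$ over $G$, so that $\tilde{G} \to \tilde{G}_p$ is a $\Z/q\Z$-cover and $\tilde{G}\to\tilde{G}_q$ is a $\Z/p\Z$-cover. The key analytic point is then an analogue of \propref{cone-laman-lift}: I would claim that $\tilde{G}$ is Laman-sparse if and only if both $\tilde{G}_p$ and $\tilde{G}_q$ are Laman-sparse. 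One direction is immediate, since $\tilde{G}_p$ and $\tilde{G}_q$ are quotients (hence contain no more structure that could violate sparsity) — actually more carefully, a Laman-circuit upstairs that has trivial image in the relevant cover pushes down, and one with nontrivial image blows up the edge-to-vertex ratio, exactly as in the proof of \propref{cone-laman-lift}. For the other direction, a Laman-circuit $H$ in $\tilde{G}$ has an orbit under $\Z/p\Z\times\Z/q\Z$; I would analyze which of the two factor actions move $H$ off itself. If the $\Z/p\Z$-action already moves $H$ in a way that makes the orbit overlap, then the argument of \propref{cone-laman-lift} applied to the cover $\tilde{G}\to\tilde{G}_q$ shows $\tilde{G}_q$ fails Laman-sparsity; symmetrically for the other factor; and if neither factor action causes overlap then $H$ is disjoint from all its translates and descends to a Laman-circuit in $G$ of $\rho$-rank zero, hence in both $\tilde{G}_p$ and $\tilde{G}_q$.

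Once these two biconditionals are in place, the proposition follows by a short chain: $(G,\bgamma)$ is cone-Laman (sparse with equality, i.e., $2n-1$ edges and cone-Laman-sparse) $\iff$ both projections are cone-Laman (here I use that the projections also have $2n-1$ edges, trivially, since the underlying graph is unchanged) $\iff$ (by \propref{cone-laman-lift} applied twice) both $\tilde{G}_p$ and $\tilde{G}_q$ are Laman-sparse $\iff$ $\tilde{G}$ is Laman-sparse. I expect the main obstacle to be the precise bookkeeping in the ``product'' version of \propref{cone-laman-lift}: I need \lemref{connected-lift} in the form that a connected subgraph's $\Z/p\Z\times\Z/q\Z$-lift is connected iff its $\rho$-rank is nonzero, together with a careful count of how a ring-like gluing of Laman-circuits behaves when the ring has length $p$, length $q$, or length $pq$. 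The cleanest route is probably to avoid re-proving everything by noting that a Laman-circuit violating sparsity in $\tilde{G}$ has a minimal sub-orbit under the $\Z/p\Z\times\Z/q\Z$-action generated by a single element of prime order $p$ or $q$ (every nonidentity element of $\Z/p\Z\times\Z/q\Z$ has order $p$, $q$, or $pq$, and in the last case its $p$-th or $q$-th power has prime order), reducing to exactly the situation already handled by \propref{cyclic-eliminate} and the second half of the proof of \propref{cone-laman-lift}.
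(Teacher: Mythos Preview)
Your main chain of biconditionals breaks. The first equivalence---that $(G,\bgamma)$ is cone-Laman-sparse if and only if both projections $(G,\pi_p\bgamma)$ and $(G,\pi_q\bgamma)$ are---fails in the forward direction. Take $G$ to be a single vertex with a self-loop colored $(0,1)\in\Z/p\Z\times\Z/q\Z$: this is a cone-Laman graph (the $\rho$-image is nontrivial, so the bound on the one subgraph is $2\cdot 1-1=1$), but $(G,\pi_p\bgamma)$ is a self-loop with color $0$, which is not even $(2,3)$-sparse. The observation you made is correct---a connected component is nontrivial for $\bgamma$ iff it is nontrivial for \emph{at least one} projection---but the conclusion is the opposite of what you drew: the cone-Laman bound for $\bgamma$ on a subgraph is the \emph{maximum} of the two projection bounds, so cone-Laman-sparsity of $(G,\bgamma)$ is strictly weaker than cone-Laman-sparsity of both projections. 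Your third biconditional fails on the same example and in the matching direction: the full lift $\tilde{G}$ is $p$ disjoint $q$-cycles, which is Laman-sparse since $q\ge 3$, while $\tilde{G}_p$ is $p$ isolated self-loops, which is not. In short, the intermediate condition ``both projections cone-Laman'' (equivalently, by \propref{cone-laman-lift}, ``both $\tilde{G}_p$ and $\tilde{G}_q$ Laman-sparse'') is strictly stronger than either endpoint of the proposition, so it cannot sit in the middle of an iff chain.

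The paper's route avoids the projections entirely. It reruns the proof of \propref{cone-laman-lift} after replacing \lemref{connected-lift} by its refinement: the number of connected components of the lift of a connected subgraph equals the index of its $\rho$-image in $\Gamma$. For $\Gamma=\Z/p\Z\times\Z/q\Z$ with $p,q$ distinct odd primes, a nontrivial subgroup has index $1$, $p$, or $q$, and in each of these cases the gluing-and-counting argument in the second half of \propref{cone-laman-lift} still forces the quotient subgraph to have at least $2n''$ edges. Your final paragraph gestures in this direction, but passing to a cyclic sub-orbit of prime order does not by itself produce a cone-Laman violation down in $G$; it is the index count on the full $\Z/p\Z\times\Z/q\Z$-orbit that does the work.
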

\begin{proof}
\lemref{connected-lift} has the following refinement: the number of connected components in the
lift of $(G,\bgamma)$ is the index of its $\rho$-image in $\Gamma$ \cite[Lemmas 5.4 and 5.5]{MT10}.  In this
case, the only possibilities for the index are $p$, $q$, and $1$.  The rest of the proof of
\propref{cone-laman-lift} then goes through.
\end{proof}
The reductions from Ross-graphs to cone-Laman graphs then goes through using the steps from
\secref{cylinder-laman-proof}.

\section{Conclusions}
We conclude we several questions and potential directions.
\subsection{Hennberg constructions for all cone-Laman graphs}
For cylinder-Laman  graphs \theoref{main}
settles the question of inductive constructions. 	We also gave a new, pleasant, proof of an existing
characterization for Ross graphs.  We note, however, that while cone-Laman
graphs have rigidity characterizations for colors in $\Z/k\Z$  for \emph{any} $k\ge 2$, \theoref{main}
only applies to $\Z/p\Z$.
\begin{question}
Give a Henneberg construction for cone-Laman graphs with colors from any group $\Z/k\Z$.
\end{question}
The main difficulty seems to be when the $\rho$-image of a subgraph has order two.  This
causes \propref{cone-laman-lift} to fail, so it will require a different argument.

\subsection{Ross-circuits and global rigidity}
The most natural application of these Henneberg moves on colored graphs would be in
characterizing \emph{global rigidity} (see e.g., \cite{GHT10,C05}) for
fixed-lattice frameworks \cite[Section 19.1]{MT10}, \cite{R11}.  We are unaware of
a detailed conjecture for the right class of graphs.  However, Bruce Hendrickson's
proof that a globally rigid planar framework must be redundantly rigid \cite{H92}
extends to the fixed-lattice setting.  This tells us that Ross-circuits will play a role.
\begin{question}
Give an inductive characterization of Ross-circuits.
\end{question}
It seems, by analogy with \cite{BJ03}, plausible that \textbf{(H2c)} and $2$-sum
are sufficient.

\subsection{Generalizing \propref{cone-laman-lift}}
A more combinatorial direction relates to generalizing the relationship that holds
between the cone-Laman and Laman matroids.  Recalling \eqref{colored-laman-sparse},
we see that this generalizes the bounding function
``$kn'-\ell$'' from $(k,\ell)$-sparsity in two ways: the dimension of the $\rho$-image determines a
positive adjustment; and the $\rho$-image of each connected component determines a negative adjustment.
We are unaware of families like this having been studied before, though \cite[``Matroid Theorem'']{Z82}
appears to contain part of the story.

In fact, \cite{MT11} extends this idea further, allowing non-abelian groups $\Gamma$ that admit
a kind of ``uniform matroid'' structure \cite[Section 8]{MT11}.  Thus, since the colored graph
families treated here are part of a much more general phenomenon, we ask:
\begin{prop}
Can one characterize sparsity matroids on colored graphs in terms of an appropriate
``matroid lift'' that does not explicitly reference the colors on the base graph?
\end{prop}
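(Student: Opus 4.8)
The statement is really a research programme; I describe the framework I would set up and what I would aim to prove in it.

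\emph{A notion of ``matroid lift''.} Fix $\Gamma$ and a colored graph $(G,\bgamma)$. For each finite-index subgroup $N$ of $\Gamma$ there is a finite cover $(\tilde G_N,\varphi_N)$ with deck group $\Gamma/N$, and the covering projection induces a surjection $q_N\colon E(\tilde G_N)\to E(G)$ whose fibers are the $\Gamma/N$-edge-orbits (\lemref{colors}). Call a matroid $\mathcal M$ on $E(G)$ a \emph{matroid lift} if there is a rule $N\mapsto \mathcal M_N$, each $\mathcal M_N$ a $\Gamma/N$-invariant matroid on $E(\tilde G_N)$ \emph{defined without reference to $\bgamma$}, such that $A\subseteq E(G)$ is independent in $\mathcal M$ exactly when $q_N^{-1}(A)$ is independent in $\mathcal M_N$ for all sufficiently divisible $N$. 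In this language, \propref{cone-laman-lift} and its $\Z/p\Z\times\Z/q\Z$ refinement say precisely that the cone-Laman matroid is the matroid lift of the planar generic rigidity (Laman) matroids on the covers.

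\emph{Positive side for the present families.} I would first check that the three families of \theoref{main} are all matroid lifts of \emph{count} matroids: cone-Laman $\leftrightarrow$ $(2,3)$ is \propref{cone-laman-lift}; cylinder-Laman and Ross reduce, via \cite[Theorem 8]{MT12} and \cite[Lemma 4]{BHMT11}, to cone-Laman plus the condition that the underlying graph be $(2,2)$-spanning, and that condition is itself read off the $(2,2)$-count matroid of the \emph{cover}. So the matroid union on the base collapses to a single count (equivalently rigidity) matroid upstairs once $N$ is chosen with index divisible by enough distinct primes, exactly as in the $\Z/p\Z\times\Z/q\Z$ proposition. This gives a uniform, color-free packaging for everything proved in this paper.

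\emph{The obstruction and a candidate general answer.} The difficulty is the periodic colored-Laman count \eqref{colored-laman-sparse}: the term $\max\{2r-1,0\}$ is piecewise linear, not linear, in the $\rho$-rank, so under passage to a cover the contribution of a subgraph that is balanced upstairs but unbalanced downstairs jumps discontinuously, and no single $\Gamma/N$-invariant count matroid on a fixed cover reproduces it --- this is the ``does not extend naively'' remark in the introduction. The natural repair is to replace the count matroid on the cover by the \emph{rigidity} matroid of the quotient-torus framework with flexible lattice, whose rank \cite{MT10} identifies with the right-hand side of \eqref{colored-laman-sparse}, and to render this color-free by augmenting each cover with a bounded combinatorial \emph{frame gadget} carrying the $2r-1$ lattice parameters (an apex-type construction, not defined through $\bgamma$); for the non-abelian groups of \cite{MT11} the abstract ``uniform matroid'' structure there would play the role of the gadget. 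The plan is then to prove that \eqref{colored-laman-sparse} is the matroid lift of (gadget-augmented cover, rigidity matroid).

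\emph{Where it is hard.} The crux is reconciling two opposing requirements: a color-free matroid on the cover is automatically $\Gamma/N$-equivariant, so its rank is additive over disjoint orbits, whereas \eqref{colored-laman-sparse} depends on the gain data only through $\rho$-rank but in a genuinely non-additive way. Overcoming this seems to force either infinite covers --- losing finiteness and the $O(n^2)$ pebble-game consequence that makes \propref{cone-laman-lift} attractive --- or a frame gadget whose definition must itself be matroidal and uniform across groups; finding the right such gadget is the open problem. I would expect the first clean theorem to be conditional: every matroid of the form \eqref{colored-laman-sparse} is a matroid lift once a bounded lattice gadget is allowed on the cover, while the purely color-free version holds exactly when the positive adjustment is linear in $\rho$-rank --- i.e.\ for cylinder-Laman and for $\rho$-rank at most one --- with a genuine obstruction otherwise; making that obstruction precise, guided by \cite[``Matroid Theorem'']{Z82} and \cite[Section 8]{MT11}, is the natural next step.
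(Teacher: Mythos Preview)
There is no proof in the paper to compare against. Despite being typeset in a \texttt{prop} environment, this item is an open question---note that it is phrased as ``Can one characterize\ldots?'' and sits in the Conclusions section alongside two items explicitly labeled as \texttt{question}s. The use of \texttt{prop} here is almost certainly a typo for \texttt{question}; the paper offers no argument, only the surrounding motivation pointing to \cite[``Matroid Theorem'']{Z82} and \cite[Section 8]{MT11}.

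You recognized this correctly and responded with a research outline rather than a proof, which is the right call. Your framework---defining a matroid lift via an inverse system of $\Gamma/N$-invariant matroids on finite covers, verifying that the three families of the paper fit, and isolating the piecewise-linear term $\max\{2r-1,0\}$ in \eqref{colored-laman-sparse} as the obstruction to a naive extension---is sensible and aligns with the hints the paper gives (the ``does not extend naively'' remark and the references just cited). The frame-gadget idea is speculative but plausible. Since the paper supplies no proof, there is nothing further to adjudicate: your submission is an appropriate response to what is, in substance, a question rather than a proposition.
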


\bibliographystyle{plainnat}

\end{document}